\numberwithin{equation}{section}
\theoremstyle{plain}
\newtheorem{theorem}{Theorem}[section]
\newtheorem{proposition}[theorem]{Proposition}
\newtheorem{corollary}[theorem]{Corollary}
\newtheorem{lemma}[theorem]{Lemma}}
\theoremstyle{remark}   
\newtheorem{definition}[theorem]{Definition}
\newtheorem{remark}[theorem]{Remark}
\newtheorem{example}[theorem]{Example}}
\newcommand{\C}{\mathbb{C}}
\newcommand\blfootnote[1]{%
\begingroup
\renewcommand\thefootnote{}\footnote{#1}%
\addtocounter{footnote}{-1}%
\endgroup
}
\author{Fran\c{c}ois Legrand}
\email{francois.legrand@unicaen.fr}
\address{Normandie Univ., UNICAEN, CNRS, Laboratoire de Math\'ematiques Nicolas Oresme, 14000 Caen, France}
\author{Elad Paran}
\email{paran@openu.ac.il}
\address{Department of Mathematics and Computer Science, the Open University of Israel, Ra'anana 4353701, Israel}
\title{L\"uroth's and Igusa's theorems over Division Rings}
\begin{document}

\maketitle

\vspace{-1cm}

\begin{abstract}
Let $H$ be a division ring of finite dimension over its center, let $H[T]$ be the ring of polynomials in a central variable over $H$, and let $H(T)$ be its quotient skew field. We show that every intermediate division ring between $H$ and $H(T)$ is itself of the form $H(f)$, for some $f$ in the center of $H(T)$. This generalizes the classical L\"uroth's theorem. More generally, we extend Igusa's theorem characterizing the transcendence degree 1 subfields of rational function fields, from fields to division rings. 
\end{abstract}

\section{Introduction} \label{sec:intro}

\blfootnote{2020 Mathematics Subject Classification. Primary 12E15 12F20 16K20.
}Let $K$ be an arbitrary field and let $K(X)$ be the field of rational functions over $K$. L\"uroth's theorem states that every intermediate field $K \subset F \subseteq K(X)$ is itself a rational function field over $K$. The theorem was first proven for $K = \C$ by L\"uroth in \cite{Lue75}, and for a general field $K$ by Steinitz in \cite{Ste10}. This result is foundational for general field theory and for the theory of algebraic curves, see \cite[\S1.3]{Sha13}. The theorem was generalized to transcendence degree 1 subfields of rational function fields in any number of variables by Gordan in \cite{Gor87} for fields of characteristic $0$, and in general by Igusa in \cite{Igu51}. Over the years, various proofs, employing different approaches, have been given to these results, e.g., in \cite[p. 106]{Che51}, \cite{Sam53}, \cite{Iit79}, \cite{OS22}.

In the present work, we study L\"uroth's and Igusa's theorems in the more general context of division rings.

Let $H$ be a division ring (a.k.a. a skew field, or a division algebra over its center field), and let $H[T]$ be the ring of polynomials over $H$ in a central variable $T$. The ring $H[T]$ is an Ore domain, hence admits a unique quotient skew field $H(T)$. The arithmetic of the ring $H[T]$ (and more generally, of skew polynomial rings) is well-studied (see the classical works \cite{Ore33}, \cite{GM65}, and the modern works of Lam and Leroy \cite{Lam86}, \cite{LL88}, \cite{LL94}, \cite{LL04}, \cite{LLO08}, for example), however the study of the quotient skew field $H(T)$ is not as expansive. Our main result is the following one, whose first part generalizes L\"uroth's theorem, and its second part generalizes Igusa's theorem.

\begin{theorem}\label{thm:main_1} 
Let $H$ be a division ring of finite dimension over its center $Z(H)$, let $n$ be a positive integer and let $H \subseteq L \subseteq H(T_1, \dots, T_n)$ be an intermediate division ring.

\vspace{0.5mm}

\noindent
{\rm{(1)}} Assume $n=1$ and $L \not=H$. Then there exists $f \in Z(H)(T_1) \setminus Z(H)$ such that $L = H(f)$.

\vspace{0.5mm}

\noindent
{\rm{(2)}} Assume there is $g \in Z(H)(T_1, \dots, T_n)$ such that $L/H(g)$ is algebraic and such that $g$ is not algebraic over $H$. Then there exists $f \in Z(H)(T_1, \dots, T_n) \setminus Z(H)$ such that $L = H(f)$.
\end{theorem}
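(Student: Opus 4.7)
My plan is to reduce both parts to the classical Lüroth and Igusa theorems for commutative fields, by analyzing the center $F := Z(L)$ of the intermediate division ring $L$ and showing that $L$ coincides with the compositum $H \cdot F$. Set $k := Z(H)$ and $A := H(T_1, \dots, T_n)$. Since the $T_i$ are central, the multiplication map induces a $k$-algebra isomorphism $H \otimes_k k(T_1, \dots, T_n) \xrightarrow{\sim} A$, exhibiting $A$ as a central simple algebra over its center $C := k(T_1, \dots, T_n)$ of dimension $[H:k]$; in particular, the centralizer theorem for central simple algebras yields $C_A(H) = C$.

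I then identify $F$: on the one hand $F \subseteq C_A(H) = C$ because $H \subseteq L$, and on the other hand any element of $L \cap C$ lies in the center of $A$ and hence commutes with all of $L$. So $F = L \cap C$ is an intermediate field $k \subseteq F \subseteq C$. Next, invoking the relative double centralizer theorem --- which asserts that for a finite-dimensional central simple $k$-subalgebra $H$ of any $k$-algebra $L$, the multiplication map $H \otimes_k C_L(H) \to L$ is a $k$-algebra isomorphism --- I obtain $L \cong H \otimes_k C_L(H)$. Since $C_L(H) = L \cap C_A(H) = L \cap C = F$, this gives $L = H \cdot F$ inside $A$.

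For part (1) with $n = 1$, Lüroth's theorem applied to $k \subseteq F \subseteq k(T_1)$ gives $F = k$ or $F = k(f)$ for some $f \in k(T_1) \setminus k$. The first alternative forces $L = H \cdot k = H$, contradicting $L \neq H$, so $F = k(f)$ and $L = H(f)$. For part (2), the inclusion $H(g) \subseteq L$ places $g \in L \cap C = F$, hence $k(g) \subseteq F$, and $g$ transcendental over $H$ gives $\mathrm{trdeg}_k k(g) = 1$. The algebraicity of $L/H(g)$, together with the algebraicity of $H(g) = H \otimes_k k(g)$ over $k(g)$ (forced by finite-dimensionality, with central descent of relations for elements of $F$), yields that $F/k(g)$ is algebraic; hence $\mathrm{trdeg}_k F = 1$ and $F \supsetneq k$. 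Igusa's theorem applied to $k \subseteq F \subseteq k(T_1, \dots, T_n)$ then yields $F = k(f)$ for some $f \in k(T_1, \dots, T_n) \setminus k$, and therefore $L = H(f)$.

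The main obstacle is Step 2: proving $L = H \cdot F$ without a priori knowing $L$ to be finite-dimensional over $F$. This is exactly where the hypothesis that $H$ is finite-dimensional central simple over $k$ is indispensable --- $H$ is an Azumaya $k$-algebra, which makes the relative double centralizer theorem applicable in this a priori infinite-dimensional setting, forcing the multiplication map $H \otimes_k C_L(H) \to L$ to be bijective regardless of $[L:F]$. Once this structural decomposition is in hand, the reduction to the classical field-theoretic Lüroth and Igusa theorems is immediate.
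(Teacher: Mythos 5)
Your argument is correct, and it reaches the same structural goal as the paper --- the decomposition $L \cong H \otimes_{Z(H)} Z(L)$ with $Z(L)$ a subfield of $Z(H)(T_1,\dots,T_n)$, followed by the commutative L\"uroth/Igusa theorems --- but by a genuinely different route. The paper builds this decomposition by hand: it shows $H(T_1,\dots,T_n)/H$ is an \emph{outer} extension via an embedding into Laurent series (Lemma \ref{lemma:center}), proves separately that any division subring of a centrally finite division ring is centrally finite (Lemma \ref{lemma:finite}, with an appendix proof), and then applies the finite-dimensional double centralizer theorem to the centrally finite $L$ (Lemmas \ref{brubru} and \ref{lemma:algebraic}); the identification $H\cdot Z(H)(f)=H(f)$ is Lemma \ref{lemma:dl}. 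You instead identify $H(T_1,\dots,T_n)$ with $H\otimes_{Z(H)}Z(H)(T_1,\dots,T_n)$ (a fact the paper also uses, citing [ALP20]), compute $C_A(H)=Z(A)$ from this tensor description, and then invoke the splitting theorem for finite-dimensional central simple subalgebras of an \emph{arbitrary} algebra (valid since $H\otimes_{Z(H)}H^{\mathrm{op}}$ is a matrix algebra over $Z(H)$), applied directly to $H\subseteq L$; this bypasses both outerness and the central finiteness of $L$. What your approach buys is brevity and the elimination of the Laurent-series lemma and the appendix; what the paper's approach buys is self-containedness (only the classical finite-dimensional double centralizer theorem is used) and reusable machinery (outer extensions and Lemma \ref{lemma:dl} reappear in \S\ref{sec:main_2}). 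Two small points you gloss but which are fine: the splitting theorem needs $Z(H)$ to be central in $L$, which is immediate since $Z(H)\subseteq Z(A)$; and $L=H\cdot Z(H)(f)$ equals $H(f)$ because the image of $H\otimes_{Z(H)}Z(H)(f)$ is a finite-dimensional domain over $Z(H)(f)$, hence a division ring --- exactly the content of the paper's Lemma \ref{lemma:dl}. Your ``central descent of relations'' in part (2) is also sound: a relation $\sum a_i x^i=0$ with $a_i\in H(g)$ and $x\in Z(L)$ central descends to $Z(H)(g)$ by expanding the $a_i$ in a $Z(H)$-basis of $H$, which remains linearly independent over $Z(A)$.
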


Here $T_1,\ldots,T_n$ denote independent central variables and $H(f)$ denotes the division ring genera\-ted by $f$ over $H$ inside $H(T_1,\ldots,T_n)$ -- the terminology and notations are reviewed in detail in \S\ref{sec:prelim}. 

In order to prove the theorem, we first prove several claims concerning extensions of division rings, that eventually allow us to deduce the theorem from the classical, commutative version of it. This is done in \S\ref{sec:main_1}. 

After concluding the proof of Theorem \ref{thm:main_1}, we turn out attention to the more general skew polynomial ring $H[T,\sigma]$, where $\sigma$ is an automorphism of the division ring $H$. In the present context, it is natural to ask whether a version of L\"uroth's theorem holds for the quotient skew field $H(T,\sigma)$ of $H[T,\sigma]$. In the case where $\sigma$ is an inner automorphism, we observe that such a version immediately follows from our main result, see Remark \ref{rk:inner}. However, the case where $\sigma$ is an arbitrary automorphism seems more difficult, even in the simplest case where $H$ itself is a field and $\sigma$ is of order $2$. In \S\ref{sec:main_2}, we consider the skew polynomial ring $\C[T,\sigma]$, where $\sigma$ is the complex conjugation, and its quotient skew field $\C(T,\sigma)$, and study the intermediate division rings between $\C$ and $\C(T,\sigma)$. We prove that any intermediate division ring $\C \subseteq D \subseteq \C(T,\sigma)$ is of the form $\C(f)$, provided that $D$ is $\sigma$-invariant, see Definition \ref{def:invariant} and Theorem \ref{thm:main_2}. However, we observe that $\sigma$-invariance is only a sufficient condition for $D$ to be of this form, not a necessary one, see Proposition \ref{converse}. It remains an open question whether L\"uroth's theorem holds unconditionally for $\C(T,\sigma)$, and more generally for skew fields of the form $H(T,\sigma)$, see also Example \ref{referee_example}. Another open question is whether Theorem \ref{thm:main_1} holds for division rings of infinite dimension over their center.

\vspace{2mm}

{\bf{Acknowledgments.}} We thank the anonymous referee for his/her comments, and in particular for providing us with Example \ref{referee_example}. We thank Adam Chapman for his help with Lemma \ref{lemma:finite}. This work fits into Project TIGANOCO ({\it{Th\'eorie Inverse de GAlois NOn COmmutative}}), which is funded by the European Union within the framework of the Operational Programme ERDF/ESF 2014-2020. 

\section{Preliminaries} \label{sec:prelim}

In this section, we collect the basic material on division rings which will be used in the sequel.

A {\it{division ring}} is a (unital, associative) ring $H$ in which every non-zero element is invertible. Given a non-zero element $b$ of a division ring $H$, the conjugation by $b$ in $H$ is an {\it{inner}} automorphism of $H$, denoted by $I_H(b)$, and we let ${\rm{Inn}}(H)$ denote the group of all inner automorphisms of $H$.

For the next three items, we fix an extension $L/H$ of division rings (i.e., $H \subseteq L$).

\noindent
$\bullet$ The {\it{degree}} $[L:H]$ of $L/H$ is the dimension of the (left) $H$-linear space $L$ and $L/H$ is {\it{finite}} if $[L : H] < \infty$. We also say that $H$ is {\it{centrally finite}} if $H$ is a finite extension of its center $Z(H)$.

\noindent
$\bullet$ Given a subset $S$ of $L$, we let $H(S)$ denote the intersection of all division rings contained in $L$ and which contain both $H$ and $S$, and we simply write $H(x)$ if $S=\{x\}$. We then say that an element $x$ of $L$ is {\it{algebraic}} over $H$ if $H(x)/H$ is finite and that $L/H$ is {\it{algebraic}} if every element of $L$ is algebraic over $H$.

\noindent
$\bullet$ Letting ${\rm{Aut}}(L/H)$ denote the group of all automorphisms of $L$ fixing $H$ point-wise, we say that $L/H$ is {\it{outer}} if ${\rm{Inn}}(L) \cap {\rm{Aut}}(L/H) = \{{\rm{id}}_L\}$. Equivalently, if $C_L(H)$ denotes the {\it{centralizer}} of $H$ in $L$, i.e., $C_L(H) = \{x \in L : xy=yx \, \, (y \in H)\}$, then $L/H$ is outer if and only if $C_L(H)$ equals the center $Z(L)$ of $L$. In particular, if $L/H$ is outer, then $Z(H) \subseteq Z(L)$.

\begin{lemma} \label{lemma:outer}
Let $L/H$ be an outer extension of division rings.

\vspace{0.5mm}

\noindent
{\rm{(1)}} The extensions $L/F$ and $F/H$ are outer for every intermediate division ring $H \subseteq F \subseteq L$.

\vspace{0.5mm}

\noindent
{\rm{(2)}} Assume $H$ is centrally finite and $L/H$ is algebraic. Then $Z(L)/Z(H)$ is algebraic.
\end{lemma}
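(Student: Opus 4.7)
The plan is to handle the two assertions separately, using only the reformulation of outerness given in the preliminaries: $L/H$ is outer precisely when $C_L(H)=Z(L)$.

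For part (1), to show that $L/F$ is outer I would observe that enlarging the set being centralized shrinks the centralizer: since $H\subseteq F$, we get $C_L(F)\subseteq C_L(H)=Z(L)$, and the reverse inclusion $Z(L)\subseteq C_L(F)$ is automatic, so $C_L(F)=Z(L)$. To show that $F/H$ is outer, I would compute $C_F(H)=F\cap C_L(H)=F\cap Z(L)$ and then establish the chain
$$Z(F)\ \subseteq\ C_F(H)\ \subseteq\ F\cap Z(L)\ \subseteq\ Z(F),$$
where the first inclusion uses that $Z(F)$ commutes with $F\supseteq H$, and the last one uses that an element of $Z(L)$ lying in $F$ commutes with every element of $L$, in particular with every element of $F$. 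All inclusions collapse to equalities and in particular $C_F(H)=Z(F)$, as required.

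For part (2), I would fix an arbitrary $z\in Z(L)$ and aim to produce a nonzero polynomial over $Z(H)$ that $z$ satisfies. By the algebraicity hypothesis, $H(z)/H$ is finite; combined with the central finiteness $[H:Z(H)]<\infty$, the standard tower argument in the division-ring setting (multiplying a left $H$-basis of $H(z)$ by a left $Z(H)$-basis of $H$ produces a left $Z(H)$-spanning set of $H(z)$) yields $[H(z):Z(H)]<\infty$. Because $z\in Z(L)$ commutes with the elements of $Z(H)\subseteq L$, the subring $Z(H)[z]$ is a commutative subring of $H(z)$, hence is itself finite-dimensional over $Z(H)$. Therefore the powers $1,z,z^2,\dots$ are $Z(H)$-linearly dependent, which exhibits $z$ as algebraic over $Z(H)$.

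The only step that requires a moment of genuine care is the tower-degree inequality for non-commutative extensions in part (2); everything else is a manipulation of centralizers directly from the definitions. I do not expect a real obstacle here, because the tower argument is a formal calculation with spanning sets and does not rely on commutativity of the extensions involved.
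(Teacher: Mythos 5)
Your proposal is correct and follows essentially the same route as the paper: part (1) is the paper's argument rephrased through the equivalent centralizer characterization $C_L(H)=Z(L)$ instead of inner automorphisms, and part (2) is the same dimension count inside the finite extension $H(z)/Z(H)$, with the commutative witness taken to be $Z(H)[z]$ rather than the paper's $H(z)\cap Z(L)$.
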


\begin{proof}
(1) First, if $I_L(a)$ ($a \in L \setminus \{0\}$) is in ${\rm{Aut}}(L/F)$, then $I_L(a) \in {\rm{Aut}}(L/H)$. As $L/H$ is outer, we get $I_L(a) = {\rm{id}}_L$. Next, fix $a \in F \setminus \{0\}$ such that $I_F(a) \in {\rm{Aut}}(F/H)$. Then $I_L(a) \in {\rm{Aut}}(L/H)$ and, as $L/H$ is outer, we get that $I_L(a) = {\rm{id}}_L$. In particular, $I_F(a) = {\rm{id}}_F$.

\vspace{0.5mm}

\noindent
(2) Fix $x \in Z(L)$. As $L/H$ is algebraic, $x$ is in some intermediate division ring $H \subseteq F \subseteq L$ with $F/H$ finite. Then, as $H$ is centrally finite, $F/Z(H)$ is finite. Thus $F \cap Z(L)$ is a division ring which contains $Z(H)$ and $x$, which is contained in $Z(L)$ and which is a finite extension of $Z(H)$.
\end{proof}

\begin{lemma} \label{lemma:finite}
Let $H \subseteq L$ be division rings with $L$ centrally finite. Then $H$ is centrally finite.
\end{lemma}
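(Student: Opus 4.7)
The plan is to reduce $L$ to a convenient form and then exploit the classical fact that tensoring a simple ring with a field extension of its center yields a simple ring. First I would replace $L$ by the subring $L' = Z(L)\cdot H \subseteq L$: this is a $Z(L)$-subalgebra of the finite-dimensional $Z(L)$-algebra $L$, hence itself finite-dimensional over $Z(L)$, and being a domain inside a division ring, it is a division ring. Thus $L'$ is still centrally finite and contains $H$, with $L' = Z(L')\cdot H$ since $Z(L) \subseteq Z(L')$. A direct verification then gives $Z(L') \cap H = Z(H)$: any $x \in H$ that commutes with all of $L'$ commutes in particular with $H$, so lies in $Z(H)$; conversely, any $x \in Z(H)$ commutes with $H$ and trivially with $Z(L') \supseteq Z(L)$, hence with $Z(L)\cdot H = L'$.

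Setting $K = Z(L')$, I would then consider the natural map
\[
\phi : H \otimes_{Z(H)} K \longrightarrow L', \qquad h \otimes k \mapsto hk,
\]
which is well-defined thanks to $Z(H) = K \cap H$ and the fact that $K$ is central in $L'$; it is a surjective ring homomorphism because $L' = K\cdot H$. The key technical step is to show that the source $H \otimes_{Z(H)} K$ is a simple ring. This is the classical fact that the tensor product of a simple ring with a field extension of its center remains simple: for a non-zero element $x = \sum_{i=1}^n r_i \otimes k_i$ of a non-zero two-sided ideal, one may assume $n$ is minimal and the $k_i$ are $Z(H)$-linearly independent, then use the simplicity of $H$ to reduce to $r_1 = 1$, and finally show by considering $[r \otimes 1,\, x] \in I$ for $r \in H$ and invoking minimality that each $r_i$ ($i \geq 2$) must lie in $Z(H)$, so $x = 1 \otimes k$ with $0 \neq k \in K$, whence $1 \otimes 1 \in I$.

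With simplicity in hand, the non-zero homomorphism $\phi$ has trivial kernel, so $\phi$ is an isomorphism. Comparing $K$-dimensions yields
\[
[H : Z(H)] \;=\; \dim_K \bigl(H \otimes_{Z(H)} K\bigr) \;=\; \dim_K L' \;=\; [L':Z(L')] \;<\; \infty.
\]
I expect the main obstacle to be precisely the simplicity of $H \otimes_{Z(H)} K$; the rest is routine bookkeeping about centers and dimensions. It is crucial that the simplicity argument sketched above does not require $[H:Z(H)]$ to be finite in advance, since that would make the whole approach circular.
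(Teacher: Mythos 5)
Your proof is correct, and its overall strategy coincides with the paper's appendix proof: both pass to the compositum of $H$ with $Z(L)$ (your $L'=Z(L)\cdot H$ is the paper's $H\cdot Z(L)$) and bound $[H:Z(H)]$ by a finite dimension over a commutative subfield, using the multiplication map out of a tensor product over $Z(H)$. The difference lies in how the crucial faithfulness of that map is obtained and used. The paper tensors with the (possibly smaller) field $Z(H)\cdot Z(L)$, cites the injectivity of $\psi\colon H\otimes_{Z(H)}(Z(H)\cdot Z(L))\to L$ from the literature, and then carries out an explicit dual-functional computation to show that a $Z(H)$-basis of $H$ remains linearly independent over $Z(H)\cdot Z(L)$, whence $[H:Z(H)]\le \dim_{Z(H)\cdot Z(L)}\,H\cdot Z(L)<\infty$. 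You instead tensor with the full center $K=Z(L')$, prove directly that $H\otimes_{Z(H)}K$ is simple by the classical minimal-length/commutator argument (which, as you rightly stress, requires no finiteness of $[H:Z(H)]$, so there is no circularity), and deduce that the multiplication map is an isomorphism onto $L'$, after which the dimension count is immediate. Your route is more self-contained and yields the stronger conclusion $H\otimes_{Z(H)}Z(L')\cong L'$, a relative of Lemma \ref{lemma:algebraic}, which the paper proves only after Lemma \ref{lemma:finite} and via the double centralizer theorem; the paper's route avoids reproving the simplicity theorem by outsourcing injectivity to a citation, needing only linear independence, at the cost of heavier bookkeeping with dual bases.
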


\begin{proof}[Comments on proof]
The statement is well-known to experts but the authors could not find an explicit reference for it in the literature. For the sake of completeness, we provide a proof of the lemma in \S\ref{sec:finite}.
\end{proof}

A non-zero ring $R$ with no zero divisor is a {\it{right Ore domain}} if, for any $x, y \in R \setminus \{0\}$, there exist $r, s \in R \setminus \{0\}$ with $xr = ys$. If $R$ is a right Ore domain, there is a division ring $H$ which contains $R$ and every element of which can be written as $ab^{-1}$ with $a \in R$ and $b \in R \setminus \{0\}$ (see, e.g., \cite[Theo\-rem 6.8]{GW04}). Moreover, $H$ is unique up to isomorphism (see, e.g., \cite[Proposition 1.3.4]{Coh95}).

Given a ring $H$ and an automorphism $\sigma$ of $H$, the polynomial ring $H[T, \sigma]$ is the ring of all polynomials of the form $a_0 + a_1 T + \cdots + a_n T^n$ with $n \geq 0$ and $a_0, \dots, a_n \in H$, whose addition is defined component-wise and whose multiplication is given by the usual rule
$$\bigg(\sum_{i=0}^n a_i T^i \bigg) \cdot \bigg(\sum_{j=0}^m b_j T^j \bigg) = \sum_{k=0}^{n+m} \bigg(\sum_{\ell=0}^k a_\ell \sigma^\ell (b_{k-\ell}) \bigg) T^k.$$
In the sense of Ore (see \cite{Ore33}), the ring $H[T, \sigma]$ is the polynomial ring $H[T, \sigma, \delta]$ in the variable $T$, where the $\sigma$-derivation $\delta$ is the zero derivation.

For the rest of this section, we fix a division ring $H$. Then $H[T, \sigma]$ has no zero divisors, as the degree is additive on products. Moreover, $H[T, \sigma]$ is a right Ore domain (see, e.g., \cite[Theorem 2.6 and Corollary 6.7]{GW04}). The unique division ring which contains $H[T, \sigma]$ and each element of which can be written as $ab^{-1}$ with $a \in H[T, \sigma]$ and $b \in H[T, \sigma] \setminus \{0\}$ is then denoted by $H(T, \sigma)$.

If $\sigma={\rm{id}}_H$, we write $H[T]$ and $H(T)$ for simplicity, and the variable $T$ is a central element of $H(T)$. One can iteratively construct polynomial rings in several central variables over $H$, by putting $H[T_1,T_2] = H[T_1][T_2]$, $H[T_1,T_2,T_3] = H[T_1,T_2][T_3]$, and so on. As the variables are central, the order in which they are added does not change the ring obtained. Furthermore, by an easy induction, the ring $H[T_1, \dots, T_n]$ in $n$ central variables over $H$ is a right Ore domain for every $n \geq 1$. The unique division ring which contains $H[T_1, \dots, T_n]$ and each element of which can be written as $ab^{-1}$ with $a \in H[T_1, \dots, T_n]$ and $b \in H[T_1, \dots, T_n] \setminus \{0\}$ is then denoted by $H(T_1, \dots, T_n)$. In the sequel, we will constantly use the equalities $H(T_1, \dots, T_n, T_{n+1}) = H(T_1, \dots, T_n)(T_{n+1})$ and $Z(H(T_1, \dots, T_n)) = Z(H)(T_1, \dots, T_n)$, and that $H(T_1, \dots, T_n)$ is centrally finite if $H$ is ($n \geq 1$). See, e.g., \cite[Propositions 2 and 3]{ALP20} for more details.

We also consider the division ring $H((T))$ of Laurent series of the form $\sum_{i \geq i_0} a_i T^i$, where $i_0 \in \mathbb{Z}$ and $a_i \in H$ for $i \geq i_0$, whose addition and multiplication are defined component-wise and by 
$$\bigg(\sum_{i \geq i_0} a_i T^i \bigg) \bigg( \sum_{i \geq i_1} b_i T^i \bigg) = \sum_{i \geq i_0 + i_1} \bigg(\sum_{\ell=0}^{i-i_0-i_1} a_{i_0 + \ell} \, b_{i-i_0 - \ell} \bigg) T^i,$$
respectively. Since $H[T]$ is a right Ore domain contained in $H((T))$, we have $H(T) \subseteq H((T))$. See, e.g., \cite[\S2.3]{Coh95} for more details on division rings of Laurent series.

\section{Proof of Theorem \ref{thm:main_1}} \label{sec:main_1}

To prove Theorem \ref{thm:main_1}, we will require the next four lemmas. The first one characterizes finite outer extensions of centrally finite division rings.

\begin{lemma} \label{brubru}
Let $H$ be a centrally finite division ring and $L$ a finite extension of $H$ with $Z(H) \subseteq Z(L)$. The next three conditions are equivalent:

\vspace{0.5mm}

\noindent
{\rm{(i)}} the unique $Z(H)$-linear map $\psi : H \otimes_{Z(H)} Z(L) \rightarrow L$ which fulfills $\psi(x \otimes y) = xy$ for every $(x,y) \in H \times Z(L)$ is an isomorphism of $Z(H)$-algebras,

\vspace{0.5mm}

\noindent
{\rm{(ii)}} $L/H$ is outer,

\vspace{0.5mm}

\noindent
{\rm{(iii)}} $[Z(L) : Z(H)]=[L:H]$.
\end{lemma}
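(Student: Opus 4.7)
Write $k = Z(H)$ and $K = Z(L)$ throughout; both extensions $H/k$ and $L/K$ are finite, since $[L:k] = [L:H]\,[H:k] < \infty$ (Lemma \ref{lemma:finite} is not even needed here, as we have $[L:H]<\infty$ and $[H:k]<\infty$ by hypothesis). Because $K\subseteq Z(L)$, the elements of $K$ commute with those of $H$ inside $L$, so $\psi$ is not merely $k$-linear but a $k$-algebra homomorphism, and the image of $\psi$ is the subring $H\cdot K$ of $L$. Moreover, since $H$ is a central simple algebra over $k$, the base change $H\otimes_k K$ is a central simple algebra over $K$; in particular it is a simple ring, so any nonzero ring homomorphism out of it is injective. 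Thus $\psi$ is automatically injective, and the content of the lemma is to identify when it is an isomorphism.

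The plan is to prove the easy equivalence (i)$\Leftrightarrow$(iii) by dimension count, and then obtain (i)$\Leftrightarrow$(ii) via the Double Centralizer Theorem. For (i)$\Leftrightarrow$(iii): on the one hand, if $\psi$ is an isomorphism, comparing $k$-dimensions gives $[L:k] = [H:k]\,[K:k]$, and dividing by $[H:k]$ yields (iii). Conversely, if (iii) holds, the same computation shows $\dim_k(H\otimes_k K)=\dim_k L$, and since $\psi$ is already injective it must be surjective, hence an isomorphism.

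For (i)$\Rightarrow$(ii): inside $H\otimes_k K$, the centralizer of $H\otimes 1$ is visibly $Z(H)\otimes_k K = k\otimes_k K$, which $\psi$ sends isomorphically onto $K\subseteq Z(L)$. Hence $C_L(H)=K=Z(L)$, i.e., $L/H$ is outer.

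The main step is (ii)$\Rightarrow$(i), and this is where the proof will do real work. Assume $L/H$ is outer. The image $H':=\psi(H\otimes_k K)=H\cdot K$ is a simple $K$-subalgebra of $L$, and $L$ itself is a central simple $K$-algebra (division ring with center $K$, finite over $K$). The centralizer of $H'$ in $L$ equals $C_L(H)\cap C_L(K)$; since $K\subseteq Z(L)$ we have $C_L(K)=L$, and since $L/H$ is outer we have $C_L(H)=K$. Thus $C_L(H')=K$. Applying the Double Centralizer Theorem to the simple subalgebra $H'$ of the central simple $K$-algebra $L$ gives
\[
[H':K]\cdot[C_L(H'):K]=[L:K],
\]
so $[H':K]=[L:K]$ and therefore $H'=L$. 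Hence $\psi$ is surjective, and combined with injectivity it is the desired isomorphism. The one place I expect to have to be careful is checking the hypotheses of the Double Centralizer Theorem — specifically, that $L$ really is a finite-dimensional central simple algebra over $K$ (which requires the finiteness of $[L:K]$, obtained from $[L:k]<\infty$) and that $H'$ is a simple $K$-subalgebra (which is why we first identify $H'$ with $H\otimes_k K$ rather than trying to work with $H$ alone, whose center $k$ may be strictly smaller than $K$).
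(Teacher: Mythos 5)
Your proof is correct and follows essentially the same route as the paper's: injectivity of $\psi$ from the simplicity of $H\otimes_{Z(H)}Z(L)$, a dimension count for (i)$\Leftrightarrow$(iii), and the double centralizer theorem applied to the image $H\cdot Z(L)$ inside the central simple $Z(L)$-algebra $L$ to connect with (ii). The only cosmetic difference is that you establish (i)$\Rightarrow$(ii) by computing the centralizer of $H\otimes 1$ in the tensor product directly, whereas the paper reads both directions of (i)$\Leftrightarrow$(ii) off the double-centralizer dimension identity $[L:Z(L)]=[C_L(H):Z(L)]\cdot\dim_{Z(L)}({\rm Im}(\psi))$.
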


\begin{proof}
First, as the map
$$\left \{ \begin{array} {ccc}
H \times Z(L) & \rightarrow & L \\
(x,y) & \mapsto & xy
\end{array} \right. $$
is $Z(H)$-bilinear, $\psi$ is well-defined, and it is a morphism of $Z(H)$-algebras. Moreover, $\psi$ is injective (see, e.g., \cite[Proposition 2.36]{Kna07}). Furthermore, as
$${\rm{dim}}_H ({\rm{Im}}(\psi)) \times [H : Z(H)] = {\rm{dim}}_{Z(L)} ({\rm{Im}}(\psi)) \times [Z(L) : Z(H)]$$ 
and as the left-hand side is finite, $[Z(L) : Z(H)]$ is also finite and hence ${\rm{dim}}_H ({\rm{Im}}(\psi)) = [Z(L) : Z(H)].$ Then (i) $\Leftrightarrow$ (iii) is clear. Next, the centralizer of ${\rm{Im}}(\psi)$ in $L$ is the centralizer $C_L(H)$ of $H$ in $L$. As $L$ is centrally finite, the {\it{double centralizer theorem}} (see, e.g., \cite[Theorem 2.43]{Kna07}) then yields $[L : Z(L)] = [{C}_L(H) :Z(L)] \times {\rm{dim}}_{Z(L)} ({\rm{Im}}(\psi)).$ Hence, (i) $\Leftrightarrow$ (ii), as needed.
\end{proof}

Our second lemma is a variant of the first one.

\begin{lemma} \label{lemma:algebraic}
Let $L/H$ be an outer extension of division rings such that $L$ is centrally finite. Then the unique $Z(H)$-linear map $\psi : H \otimes_{Z(H)} Z(L) \rightarrow L$ which fulfills $\psi(x \otimes y) = xy$ for every $(x,y) \in H \times Z(L)$ is an isomorphism of $Z(H)$-algebras.
\end{lemma}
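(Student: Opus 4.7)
The plan is to run essentially the same argument as in Lemma \ref{brubru}, but to replace the hypothesis that $L/H$ is finite by the Double Centralizer Theorem applied inside the central simple $Z(L)$-algebra $L$.

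First I would observe that the well-definedness of $\psi$ and the fact that it is a morphism of $Z(H)$-algebras carry over unchanged from the proof of Lemma \ref{brubru}. The only modest upgrade here is that, by Lemma \ref{lemma:finite}, $H$ is centrally finite, hence a central simple algebra over $Z(H)$; consequently $H \otimes_{Z(H)} Z(L)$ is a central simple algebra over $Z(L)$. In particular $H \otimes_{Z(H)} Z(L)$ is simple, so $\psi$ is automatically injective (as it is a nonzero algebra map). Since $L$ is a division ring, the image ${\rm{Im}}(\psi)$ has no zero divisors, and so it is a division subring of $L$ which, transported along $\psi$, is a CSA over $Z(L)$.

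Next I would compute the centralizer $C_L({\rm{Im}}(\psi))$. Any $x \in L$ commuting with ${\rm{Im}}(\psi)$ commutes in particular with $H$; conversely, any $x \in C_L(H)$ automatically centralizes $Z(L)$, hence centralizes the subalgebra generated by $H$ and $Z(L)$, which is exactly ${\rm{Im}}(\psi)$. Thus $C_L({\rm{Im}}(\psi)) = C_L(H)$, and the outer hypothesis together with the characterization recalled in \S\ref{sec:prelim} gives $C_L(H) = Z(L)$.

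Finally, since $L$ is a CSA over $Z(L)$ (as $L$ is centrally finite) and ${\rm{Im}}(\psi)$ is a simple $Z(L)$-subalgebra containing $Z(L)$, the Double Centralizer Theorem yields
$$[L:Z(L)] \,=\, [{\rm{Im}}(\psi):Z(L)] \cdot [C_L({\rm{Im}}(\psi)):Z(L)] \,=\, [{\rm{Im}}(\psi):Z(L)],$$
which forces ${\rm{Im}}(\psi) = L$, so that $\psi$ is surjective and hence an isomorphism. The main point I anticipate having to be careful about is ensuring that ${\rm{Im}}(\psi)$ really is a simple $Z(L)$-subalgebra to which the Double Centralizer Theorem applies; this is precisely where Lemma \ref{lemma:finite} intervenes, by promoting $H \otimes_{Z(H)} Z(L)$ to a CSA over $Z(L)$. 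Once that point is secured, the dimension equality closes the argument in direct analogy with the equivalence (ii) $\Leftrightarrow$ (iii) of Lemma \ref{brubru}.
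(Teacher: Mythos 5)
Your proof is correct, and it reaches the conclusion by a somewhat more direct route than the paper. The paper's own proof first upgrades ${\rm{Im}}(\psi)$ to a division ring (a domain of finite dimension $[H:Z(H)]$ over $Z(L)$), identifies its center as $Z(L)$, checks that $L/{\rm{Im}}(\psi)$ is finite and outer (via Lemma \ref{lemma:outer}(1)), and then quotes Lemma \ref{brubru} for the extension $L/{\rm{Im}}(\psi)$; you instead apply the double centralizer theorem directly to the simple $Z(L)$-subalgebra ${\rm{Im}}(\psi) \cong H \otimes_{Z(H)} Z(L)$ of the central simple algebra $L$, using $C_L({\rm{Im}}(\psi)) = C_L(H) = Z(L)$ (outerness) to force $[{\rm{Im}}(\psi):Z(L)] = [L:Z(L)]$ and hence surjectivity. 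This gives you injectivity for free (simplicity of $H \otimes_{Z(H)} Z(L)$, where Lemma \ref{lemma:finite} guarantees it is a finite-dimensional central simple $Z(L)$-algebra) and dispenses with the paper's intermediate facts (Cohn's criterion that a finite-dimensional domain over a field is a division ring, the identification of $Z({\rm{Im}}(\psi))$, and the appeal to Lemma \ref{lemma:outer}(1)); what it requires is the double centralizer theorem for simple, not necessarily division, subalgebras -- which is exactly the form already used inside the proof of Lemma \ref{brubru}, so in substance your argument inlines that computation rather than citing the lemma. Two small points to make explicit: record at the outset that $Z(H) \subseteq Z(L)$ (a consequence of outerness recalled in \S\ref{sec:prelim}), since this is what makes the tensor product and the bilinear map $(x,y) \mapsto xy$ well defined; and your parenthetical claim that ${\rm{Im}}(\psi)$ is a division subring because it has no zero divisors would need its finite dimension over $Z(L)$ -- though, since your argument only uses simplicity of ${\rm{Im}}(\psi)$, that remark is harmless.
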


\begin{proof}
First, $Z(H) \subseteq Z(L)$ as $L/H$ is outer. Then, as in the proof of Lemma \ref{brubru}, the map $\psi$ is a well-defined monomorphism of $Z(H)$-algebras. Moreover, as $L$ is centrally finite, $H$ is centrally finite, by Lemma \ref{lemma:finite}. Furthermore, ${\rm{Im}}(\psi)$ is a ring with no zero divisors and a $Z(L)$-linear space with finite dimension $[H:Z(H)]$. Hence, ${\rm{Im}}(\psi)$ is a division ring (see, e.g., \cite[Proposition 3.1.2]{Coh95}), whose center equals $Z(L)$. Using that $L/Z(L)$ is finite, we get that $L/{\rm{Im}}(\psi)$ is finite and that ${\rm{Im}}(\psi)$ is centrally finite. Moreover, since $L/H$ is outer and since ${\rm{Im}}(\psi)$ is an intermediate division ring, $L/{\rm{Im}}(\psi)$ is outer, by Lemma \ref{lemma:outer}(1). Thus, $L={\rm{Im}}(\psi)$ by Lemma \ref{brubru}.
\end{proof}

Our third lemma shows that every extension of the form $H(T_1, \dots, T_n)/H$ is outer.

\begin{lemma} \label{lemma:center}
{\rm{(1)}} Let $L/H$ be an outer extension of division rings. Then $L((T))/H$ is outer.

\vspace{0.5mm}

\noindent
{\rm{(2)}} The extension $H(T_1, \dots, T_n)/H$ is outer for every $n \geq 1$.
\end{lemma}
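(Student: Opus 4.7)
The plan is to first establish part (1) by exploiting the fact that $T$ is central in $L((T))$: the multiplication rule for Laurent series recalled in \S\ref{sec:prelim} has no twist and no derivation, so $Th = hT$ for every $h \in L$, and consequently $T$ commutes with every element of $L((T))$. Once that observation is made, part (2) follows by a short induction on $n$, using the fact that $H(T_1,\ldots,T_n)$ is contained in the corresponding Laurent series ring and invoking Lemma \ref{lemma:outer}(1) to pass from the larger extension to the intermediate one.

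For part (1), I would take $a \in L((T)) \setminus \{0\}$ with $I_{L((T))}(a) \in {\rm{Aut}}(L((T))/H)$ and aim to show $a \in Z(L((T)))$. Writing $a = \sum_{i \geq i_0} a_i T^i$, the condition $ah = ha$ for every $h \in H$ becomes, after expanding using the centrality of $T$ and comparing coefficients of each $T^i$, the pointwise condition $a_i h = h a_i$ for every $i \geq i_0$ and every $h \in H$. Hence each $a_i$ lies in $C_L(H)$, which equals $Z(L)$ since $L/H$ is outer. Therefore $a$ lies in $Z(L)((T))$, and since $T$ is central, every such element commutes with all of $L((T))$. So $a \in Z(L((T)))$, giving $I_{L((T))}(a) = {\rm{id}}_{L((T))}$.

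For part (2), I would argue by induction on $n \geq 1$. The extension $H/H$ is outer (vacuously), so applying part (1) with $L = H$ shows that $H((T_1))/H$ is outer. Since $H(T_1)$ is an intermediate division ring of $H((T_1))/H$, Lemma \ref{lemma:outer}(1) transfers the property and gives that $H(T_1)/H$ is outer, settling the base case. For the inductive step, set $K = H(T_1, \dots, T_n)$ and assume $K/H$ is outer. Applying (1) to $L = K$ yields that $K((T_{n+1}))/H$ is outer, and since $H(T_1, \dots, T_{n+1}) = K(T_{n+1})$ is an intermediate division ring between $H$ and $K((T_{n+1}))$, Lemma \ref{lemma:outer}(1) again delivers the desired conclusion.

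The only genuinely delicate point is the centrality of $T$ in $L((T))$, since the coefficient-wise argument collapses completely if $T$ fails to commute with $L$; but this is immediate from the multiplication formula recalled in \S\ref{sec:prelim}, where the $\sigma$-derivation is zero and no twist is present. Everything after that is formal bookkeeping, with Lemma \ref{lemma:outer}(1) providing the essential descent from the Laurent series construction to the rational function construction.
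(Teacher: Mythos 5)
Your argument is correct and is essentially the paper's own proof: in (1) you reduce the commutation condition with $H$ to the coefficients, use $C_L(H)=Z(L)$ (outerness of $L/H$) to conclude $a\in Z(L)((T))\subseteq Z(L((T)))$, and in (2) you run the same induction, applying (1) and then Lemma \ref{lemma:outer}(1) to descend from $H(T_1,\dots,T_n)((T_{n+1}))$ to the intermediate division ring $H(T_1,\dots,T_{n+1})$. The only difference is cosmetic: you phrase (1) via inner automorphisms fixing $H$ while the paper verifies the equivalent centralizer condition directly, and you spell out the coefficient comparison and the trivial base case $L=H$ slightly more explicitly.
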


\begin{proof}
(1) Let $a \in L((T))$ be such that $ac=ca$ for every $c \in H$. Set $a = \sum_{i \geq i_0} a_i T^i$, where $i_0 \in \mathbb{Z}$ and $a_i \in L$ for every $i \geq i_0$. As $ac=ca$ for every $c \in H$, we have $ca_i = a_i c$ for every $c \in H$, i.e., every $a_i$ lies in the centralizer $C_L(H)$ of $H$ in $L$. As the extension $L/H$ is outer, we actually have $C_L(H) = Z(L)$ and hence $a \in Z(L)((T)) \subseteq Z(L((T)))$, as needed.

\vspace{1mm}

\noindent
(2) We proceed by induction on $n \geq 1$. For $n=1$, we get from (1) that $H((T_1))/H$ is outer. As $H(T_1)$ is an intermediate division ring, Lemma \ref{lemma:outer}(1) yields that $H(T_1)/H$ is outer. Now, fix $n \geq 1$ and assume $H(T_1, \dots, T_n)/H$ is outer. We may then use (1) to get that $H(T_1,\dots, T_n) ((T_{n+1}))/H$ is outer. Since $H(T_1, \dots, T_n)(T_{n+1})$ is an intermediate division ring which equals $H(T_1, \dots, T_{n+1})$, Lemma \ref{lemma:outer}(1) yields that $H(T_1, \dots, T_{n+1})/H$ is outer, thus ending the proof.
\end{proof}

Our last lemma describes the division ring generated by a central element of $H(T_1, \dots, T_n)$, if $H$ is centrally finite.

\begin{lemma} \label{lemma:dl}
Let $H$ be a centrally finite division ring, $n \geq 1$ and $f \in Z(H)(T_1, \dots, T_n)$. Then the unique $Z(H)$-linear map $\psi : H \otimes_{Z(H)} Z(H)(f) \rightarrow H(f)$ which fulfills $\psi(x \otimes y) = xy$ for every $(x, y) \in H \times Z(H)(f)$ is an isomorphism of $Z(H)$-algebras. In particular, $Z(H(f)) = Z(H)(f)$.
\end{lemma}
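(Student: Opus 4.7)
The plan is to realise $H\otimes_{Z(H)} Z(H)(f)$ as a scalar extension of a central simple algebra, and then combine its simplicity with a finite-dimensional-domain argument to conclude that $\psi$ is an isomorphism.

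First, I would verify that $\psi$ is well defined as a $Z(H)$-algebra homomorphism. Recall from \S\ref{sec:prelim} that $Z(H(T_1,\dots,T_n)) = Z(H)(T_1,\dots,T_n)$, so $f$, and hence every element of $Z(H)(f)$, lies in the center of $H(T_1,\dots,T_n)$ and commutes with $H$. The multiplication $(x,y) \mapsto xy$ from $H\times Z(H)(f)$ to $H(f)$ is therefore $Z(H)$-bilinear and compatible with the ring structures, so it descends to a $Z(H)$-algebra morphism $\psi$.

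Next, I would establish injectivity by invoking central simple algebra theory. As $H$ is centrally finite, it is a central simple $Z(H)$-algebra; by the classical scalar-extension theorem for central simple algebras (e.g., \cite[Theorem 2.43]{Kna07}, already used in this section), $A := H\otimes_{Z(H)} Z(H)(f)$ is then a central simple $Z(H)(f)$-algebra. In particular $A$ is a simple ring, so the nonzero ring homomorphism $\psi$ has trivial kernel.

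For surjectivity and the center computation, the image $\psi(A)$ is a $Z(H)(f)$-subalgebra of the division ring $H(T_1,\dots,T_n)$, hence has no zero divisors; it has $Z(H)(f)$-dimension $[H:Z(H)]<\infty$. A finite-dimensional algebra over a field with no zero divisors is itself a division ring (left multiplication by any nonzero element is injective, hence surjective by dimension), so $\psi(A)$ is a division ring. As $\psi(h\otimes 1)=h$ and $\psi(1\otimes f)=f$, the division ring $\psi(A)$ contains $H$ and $f$, so minimality of $H(f)$ forces $\psi(A)\supseteq H(f)$; the reverse inclusion is trivial. Thus $\psi$ is an isomorphism, and it carries $Z(A)=Z(H)(f)$ (via the central simplicity of $A$) onto $Z(H(f))$, yielding $Z(H(f))=Z(H)(f)$. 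The main step requiring care is the appeal to scalar extension of central simple algebras; everything else is a straightforward packaging.
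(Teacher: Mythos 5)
Your proof is correct and follows essentially the same route as the paper's: both check that $\psi$ is a well-defined injective morphism of $Z(H)$-algebras and then identify $\mathrm{Im}(\psi)$ as a zero-divisor-free $Z(H)(f)$-algebra of finite dimension $[H:Z(H)]$, hence a division ring containing $H$ and $f$, which forces $\mathrm{Im}(\psi)=H(f)$. The only (cosmetic) differences are that you justify injectivity and the equality $Z(H(f))=Z(H)(f)$ via simplicity and centrality of the scalar extension $H\otimes_{Z(H)}Z(H)(f)$, while the paper invokes an injectivity result of Knapp together with its outer-extension lemmas; note only that \cite[Theorem 2.43]{Kna07} is the double centralizer theorem rather than the scalar-extension theorem you attribute to it, though the fact you use is standard.
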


\begin{proof}
First, by Lemma \ref{lemma:outer}(1) and Lemma \ref{lemma:center}(2), the extension $H(f)/H$ is outer. In particular, $Z(H) \subseteq Z(H(f))$. Moreover, as $f \in Z(H)(T_1, \dots, T_n)$, we have $f \in Z(H(f))$ and hence the inclusions $Z(H) \subseteq Z(H)(f) \subseteq Z(H(f))$ hold.

Now, as in the proof of Lemma \ref{brubru}, the map $\psi$ is well-defined and injective. Moreover, as $Z(H)(f) \subseteq Z(H(f))$, it is a morphism of $Z(H)$-algebras. Furthermore, ${\rm{Im}}(\psi)$ is both a ring with no zero divisors and a $Z(H)(f)$-linear space with finite dimension $[H:Z(H)]$. Hence, ${\rm{Im}}(\psi)$ is a division ring, which is contained in $H(T_1, \dots, T_n)$ and which contains $H$ and $f$. Therefore, $H(f) = {\rm{Im}}(\psi)$, thus ending the proof.
\end{proof}

\begin{proof}[Proof of Theorem \ref{thm:main_1}]
We prove each statement separately, by reducing to the commutative case.

\vspace{1.5mm}

\noindent
(1) First, by Lemma \ref{lemma:outer}(1) and \ref{lemma:center}(2), the extensions $H(T_1)/L$ and $L/H$ are outer and, hence, $Z(H) \subseteq Z(L) \subseteq Z(H)(T_1)$. We may then consider the unique $Z(H)$-linear map $\psi : H \otimes_{Z(H)} Z(L) \rightarrow L$ which fulfills $\psi(x \otimes y) = xy$ for every $(x,y) \in H \times Z(L)$. Moreover, since $H(T_1)$ is centrally finite, $L$ is centrally finite by Lemma \ref{lemma:finite}. We may then apply Lemma \ref{lemma:algebraic} to get
\begin{equation} \label{Lpsi}
L = {\rm{Im}}(\psi).
\end{equation}
Next, assume $Z(L) = Z(H)$. Then ${\rm{Im}}(\psi)=H$, i.e., $L=H$ by \eqref{Lpsi}, which cannot hold. Fi\-nal\-ly, by L\"uroth's theorem and as $Z(L) \not=Z(H)$, there is $f \in Z(H)(T_1) \setminus Z(H)$ such that $Z(L) = Z(H)(f)$. Hence, by Lemma \ref{lemma:dl}, we get ${\rm{Im}}(\psi) = H(f)$, i.e., $L=H(f)$ by \eqref{Lpsi}.

\vspace{1.5mm}

\noindent
(2) First, by Lemma \ref{lemma:outer}(1) and \ref{lemma:center}(2), the extensions $H(T_1, \dots, T_n)$ and $L/H$ are outer and, hence, $Z(H) \subseteq Z(L) \subseteq Z(H)(T_1, \dots, T_n)$. We may then consider the unique $Z(H)$-linear map $\psi : H \otimes_{Z(H)} Z(L) \rightarrow L$ which fulfills $\psi(x \otimes y) = xy$ for every $(x,y) \in H \times Z(L)$. Moreover, as $H(T_1, \dots, T_n)$ is centrally finite, $L$ is centrally finite (see Lemma \ref{lemma:finite}). Lemma \ref{lemma:algebraic} then yields
\begin{equation} \label{Lpsi2}
L = {\rm{Im}}(\psi).
\end{equation}
Moreover, $H(g)$ is centrally finite (again by Lemma \ref{lemma:finite}) and, by Lemma \ref{lemma:outer}(2), the extension $Z(L)$ of $Z(H(g)) = Z(H)(g)$ (see Lemma \ref{lemma:dl} for the last equality) is algebraic. Furthermore, if $g$ is algebraic over $Z(H)$, then $Z(H)(g)/Z(H)$ is finite. As $H(g)$ is centrally finite, we get that $H(g)/H$ is finite, which cannot hold. Hence, $Z(L)/Z(H)$ has transcendence degree 1. We may then apply Igusa's generalization of L\"uroth's theorem to get that there is $f \in Z(H)(T_1, \dots, T_n) \setminus Z(H)$ with $Z(L)=Z(H)(f).$ Hence, by Lemma \ref{lemma:dl}, we get ${\rm{Im}}(\psi) = H(f)$, i.e., $L=H(f)$ by \eqref{Lpsi2}.
\end{proof}

\begin{remark} \label{rk:inner}
Given a division ring $H$ and $\sigma \in {\rm{Inn}}(H)$, say $\sigma = I_H(b)$ with $b \in H \setminus \{0\}$, we let $H[b^{-1} T]$ denote the intersection of all subrings of $H[T, \sigma]$ containing $H$ and $b^{-1}T$. Then 
\begin{equation} \label{eq:change}
H[b^{-1} T] = \{a_0 + a_1 (b^{-1} T) + \cdots + a_n (b^{-1} T)^n : n \geq 0, a_0, \dots, a_n \in H\} = H[T, \sigma]
\end{equation}
and, as the $(b^{-1} T)^n$'s ($n \geq 0$) are linearly independent over $H$, the ring $H[b^{-1} T]$ is the polynomial ring in the central variable $b^{-1}T$ over $H$. As $H(T, \sigma) = H(b^{-1}T)$ by \eqref{eq:change}, we get the next extension of Theorem \ref{thm:main_1}:

\vspace{1mm}

\noindent
{\it{Let $H$ be a centrally finite division ring and let $n$ be a positive integer. Fix $\sigma_1 \in {\rm{Inn}}(H)$ and, for every $i \in \{2, \dots, n\}$, fix $\sigma_i \in {\rm{Inn}}(H(T_1, \sigma_1)\cdots (T_{i-1}, \sigma_{i-1}))$. Let $H \subseteq L \subseteq H(T_1, \sigma_1) \cdots (T_n , \sigma_n)$ be an intermediate division ring.

\vspace{0.5mm}

\noindent
{\rm{(1)}} Assume $n=1$ and $L \not=H$. Then there is $f \in Z(H(T_1, \sigma_1)) \setminus Z(H)$ such that $L = H(f)$.

\vspace{0.5mm}

\noindent
{\rm{(2)}} Assume there is $g \in Z(H(T_1, \sigma_1) \cdots (T_n, \sigma_n))$ such that $L/H(g)$ is algebraic and such that $g$ is not algebraic over $H$. Then there is $f \in Z(H(T_1, \sigma_1) \cdots (T_n, \sigma_n)) \setminus Z(H)$ such that $L = H(f)$.}}
\end{remark}

\section{The case of $\mathbb{C}(T, \sigma)$}\label{sec:main_2}

In this section, we consider the division ring $\mathbb{C}(T, \sigma)$, where $\sigma$ denotes the complex conjugation. Let $\mathbb{H}$  denote Hamilton's quaternions algebra, i.e., $\mathbb{H} = \mathbb{R} \oplus \mathbb{R}i \oplus \mathbb{R}j \oplus \mathbb{R} k$ where $i^2 = j^2 = k^2 =ijk=-1$. From now on, we view $\mathbb{C}$ as the subring $\mathbb{R} \oplus \mathbb{R}i$ of $\mathbb{H}$.

\begin{lemma}\label{embedding} 
Consider the map 
$$\phi : \left \{ \begin{array} {ccc}
\mathbb{C}[T,\sigma] & \longrightarrow & \mathbb{H}[X] \\
a_0 + a_1 T + \cdots + a_nT^n & \longmapsto & a_0 + a_1 j X + \cdots + a_n j^n X^n
\end{array} \right.. $$
Then $\phi$ is a ring homomorphism, which is injective and which fixes $\mathbb{C}$ point-wise.
\end{lemma}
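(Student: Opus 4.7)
The plan is to exhibit $\phi$ as the unique ring homomorphism $\mathbb{C}[T,\sigma] \to \mathbb{H}[X]$ extending the inclusion $\mathbb{C} \hookrightarrow \mathbb{H}$ and sending $T$ to $jX$. The key observation is that the element $jX \in \mathbb{H}[X]$ satisfies the same commutation relation with $\mathbb{C}$ as $T$ does in $\mathbb{C}[T,\sigma]$: indeed, since $ji = -ij$ in $\mathbb{H}$, for any $a \in \mathbb{C}$ we have $ja = \sigma(a)j$, and since $X$ is central in $\mathbb{H}[X]$, this gives $(jX)a = \sigma(a)(jX)$. By induction, $j^\ell a = \sigma^\ell(a) j^\ell$ for every $\ell \geq 0$, and $(jX)^n = j^n X^n$ for every $n \geq 0$.

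I would then proceed in three steps. First, additivity of $\phi$ is immediate since addition on both sides is componentwise. Second, for multiplicativity, I would compute directly: given $P = \sum_i a_i T^i$ and $Q = \sum_j b_j T^j$ in $\mathbb{C}[T,\sigma]$, the defining formula for the product in $\mathbb{C}[T,\sigma]$ recalled in \S\ref{sec:prelim} gives
\[
\phi(PQ) = \sum_{k \geq 0} \left(\sum_{\ell=0}^{k} a_\ell \sigma^\ell(b_{k-\ell})\right) j^k X^k.
\]
On the other hand, using the centrality of $X$ and the identity $j^\ell b_{k-\ell} = \sigma^\ell(b_{k-\ell}) j^\ell$, one computes
\[
\phi(P)\phi(Q) = \sum_{i,j} a_i j^i b_j j^j X^{i+j} = \sum_{k \geq 0} \left(\sum_{\ell=0}^{k} a_\ell \sigma^\ell(b_{k-\ell})\right) j^k X^k,
\]
so the two agree. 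The map also sends $1$ to $1$, so $\phi$ is a ring homomorphism.

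Finally, $\phi$ fixes $\mathbb{C}$ pointwise by definition, as $\phi(a_0) = a_0$ for every $a_0 \in \mathbb{C}$. For injectivity, suppose $\phi(P) = 0$ with $P = \sum_{i=0}^n a_i T^i$. Since $\mathbb{H}[X]$ is a polynomial ring in the central variable $X$ over $\mathbb{H}$, each coefficient $a_i j^i \in \mathbb{H}$ must vanish; but $j^i$ is a unit in $\mathbb{H}$, so $a_i = 0$ for every $i$, i.e.\ $P = 0$. No step is genuinely hard; the only point requiring care is the verification of multiplicativity, which hinges entirely on the twist identity $j^\ell a = \sigma^\ell(a) j^\ell$ matching the $\sigma$-twisted multiplication of $\mathbb{C}[T,\sigma]$.
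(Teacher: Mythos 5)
Your proof is correct and follows essentially the same route as the paper: the whole argument rests on the twist identity $j^{\ell}b=\sigma^{\ell}(b)j^{\ell}$ for $b\in\mathbb{C}$ (which the paper checks via residues of $\ell$ modulo $4$ and you obtain by induction from $ji=-ij$), and your double-sum verification of multiplicativity is just the expanded form of the paper's reduction to monomials. The only difference is that you spell out injectivity and $1\mapsto 1$, which the paper dismisses as clear.
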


\begin{proof}
Clearly, $\phi$ fixes $\mathbb{C}$ point-wise and $\phi$ is both additive and injective. Hence, to conclude the proof, it suffices to check that $\phi$ is multiplicative on monomials. To that end, consider two monomials $aT^n$ and $bT^m$, with $n, m \in \mathbb{N}$ and $a,b \in \mathbb{C}$. By the definition of $\phi$, we have
\begin{equation} \label{eq:presented}
\phi(aT^nbT^m) = \phi(a \sigma^n(b)T^{n+m}) = a\sigma^n(b)j^{n+m}X^{n+m}.
\end{equation}
By considering the four different possible residues of $n$ modulo $4$, one checks that $\sigma^n(b)j^n = j^nb$ for all $n \geq 0$. Thus, \eqref{eq:presented} yields $\phi(aT^nbT^m) = aj^nbj^mX^{n+m} = aj^nX^{n}bj^mX^{m} = \phi(aT^n)\phi(bT^m),$ as needed. 
\end{proof}

By, e.g., \cite[Proposition 6.3]{GW04} and since $\mathbb{C}[T, \sigma]$ is a right Ore domain, the monomorphism $\phi$ from Lemma \ref{embedding} extends to the following monomorphism:
$$\phi : \left \{ \begin{array} {ccc}
\mathbb{C}(T, \sigma) & \rightarrow & \mathbb{H}(X)\\
P Q^{-1} & \mapsto & \phi(P) \phi(Q)^{-1}
\end{array} \right.. $$

Now, we extend $\sigma$ to $\mathbb{H}$ by setting 
$$\sigma(a+bi+cj+dk) = a-bi+cj-dk$$ 
for $a,b,c,d \in \mathbb{R}$. Then $\sigma$ is an automorphism of $\mathbb{H}$ of order 2. Moreover, we may extend $\sigma$ to an automorphism of order 2 of $\mathbb{H}[X]$, by setting
$$\sigma(a_0 + a_1 X + \cdots + a_n X^n) = \sigma(a_0) + \sigma(a_1) X + \cdots +  \sigma(a_n) X^n$$
for $n \geq 0$ and $a_0, \dots, a_n \in \mathbb{H}$. Finally, $\sigma$ extends to the next automorphism of order 2 of $\mathbb{H}(X)$:
$$\sigma : \left \{ \begin{array} {ccc}
\mathbb{H}(X) & \rightarrow & \mathbb{H}(X)\\
P Q^{-1} & \mapsto & \sigma(P) \sigma(Q)^{-1}
\end{array} \right.. $$

\begin{definition}\label{def:invariant}
We say that a subset $S$ of $\mathbb{H}(X)$ is {\it{$\sigma$-invariant}} if $\sigma(S)= S$.
\end{definition}

\begin{example} \label{example}
Given a subset $S$ of $\mathbb{C}(T, \sigma)$, we have $\phi(\mathbb{C}(S)) = \mathbb{C}(\phi(S))$ and, if $\phi(S)$ is $\sigma$-invariant, so is $\phi(\mathbb{C}(S))$. Using that $\sigma(i)=-i$ and that $\sigma$ fixes $\phi(\mathbb{R}(T))$ point-wise, we have, in particular, that $\phi(\mathbb{C}(S))$ is $\sigma$-invariant for every subset $S$ of $\mathbb{C}(T, \sigma)$ which is contained in $\mathbb{R}(T) \cup i \mathbb{R}(T)$.
\end{example}

The next result, which is a partial L\"uroth's theorem over $\mathbb{C}(T, \sigma)$, is the aim of this section:

\begin{theorem} \label{thm:main_2}
Let $\mathbb{C} \subseteq L \subseteq \mathbb{C}(T, \sigma)$ be an intermediate division ring such that $\phi(L)$ is $\sigma$-invariant. Then there is $v \in \mathbb{C}(T, \sigma)$ such that $L = \mathbb{C}(v)$.
\end{theorem}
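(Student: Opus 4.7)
My plan is to reduce Theorem \ref{thm:main_2} to the classical commutative L\"uroth theorem by exploiting the $\sigma$-invariance hypothesis. The first step is to introduce the unique order-$2$ automorphism $\tau$ of $\mathbb{C}(T,\sigma)$ that restricts to complex conjugation on $\mathbb{C}$ and fixes $T$. Compatibility with the multiplication rule $Ta=\sigma(a)T$ is automatic because $\sigma\circ\sigma=\mathrm{id}$, so $\tau$ is a well-defined ring automorphism of $\mathbb{C}[T,\sigma]$ and extends uniquely to $\mathbb{C}(T,\sigma)$. A direct check on monomials yields $\phi\circ\tau=\sigma\circ\phi$, so $\phi(L)$ is $\sigma$-invariant if and only if $L$ is $\tau$-invariant. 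From this point onwards I work intrinsically inside $\mathbb{C}(T,\sigma)$, which avoids the bookkeeping inherent in computing with elements of $\mathbb{H}(X)$.

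The second step is the decomposition $\mathbb{C}(T,\sigma)=\mathbb{R}(T)\oplus \mathbb{R}(T)\cdot i$ as a left $\mathbb{R}(T)$-module. Since $T$ commutes with every real number, $\mathbb{R}(T)$ is a (commutative) subfield of $\mathbb{C}(T,\sigma)$; a dimension count via the center $\mathbb{R}(T^2)$ (of index $4$ in $\mathbb{C}(T,\sigma)$ and of index $2$ below $\mathbb{R}(T)$) shows that $\{1,i\}$ is a left $\mathbb{R}(T)$-basis of $\mathbb{C}(T,\sigma)$. Because $\tau$ pointwise fixes $\mathbb{R}(T)$ and sends $i$ to $-i$, one reads off that $\mathbb{C}(T,\sigma)^\tau=\mathbb{R}(T)$. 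Writing any $f\in L$ uniquely as $f=a+bi$ with $a,b\in\mathbb{R}(T)$, the $\tau$-invariance of $L$ gives $a=\tfrac{1}{2}(f+\tau(f))\in L$ and $bi=\tfrac{1}{2}(f-\tau(f))\in L$, whence also $b=(bi)\cdot(-i)\in L$. Setting $F:=L\cap\mathbb{R}(T)$, this yields $L=F+Fi$; the reverse inclusion is immediate since $F\subseteq L$ and $i\in L$.

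I conclude by applying the classical L\"uroth theorem to the intermediate field $\mathbb{R}\subseteq F\subseteq\mathbb{R}(T)$. If $F=\mathbb{R}$ then $L=\mathbb{R}+\mathbb{R}i=\mathbb{C}$, and any $v\in\mathbb{C}$ works. Otherwise $F=\mathbb{R}(u)$ for some $u\in\mathbb{R}(T)$, and then $L=\mathbb{R}(u)+\mathbb{R}(u)i$ coincides with the division ring $\mathbb{C}(u)$ generated by $\mathbb{C}$ and $u$ inside $\mathbb{C}(T,\sigma)$: the inclusion $L\subseteq\mathbb{C}(u)$ is clear because $u,i\in\mathbb{C}(u)$, and the reverse inclusion holds because $L\supseteq\mathbb{C}$ and $L\supseteq u$. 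So $v:=u$ does the job. I do not expect a serious obstacle beyond identifying the correct automorphism $\tau$ and justifying the left $\mathbb{R}(T)$-basis $\{1,i\}$; once the setup is in place, the argument is formal and closely parallels the Galois description of a quadratic field extension.
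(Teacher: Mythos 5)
Your proof is correct, and it takes a genuinely different and more elementary route than the paper. You stay inside $\mathbb{C}(T,\sigma)$: after checking $\phi\circ\tau=\sigma\circ\phi$ (so that your coefficient-conjugation automorphism $\tau$, which fixes $T$, is just the paper's $\sigma$ on $\mathbb{H}(X)$ read through $\phi$, and ``$\phi(L)$ is $\sigma$-invariant'' becomes ``$L$ is $\tau$-invariant''), you decompose $\mathbb{C}(T,\sigma)=\mathbb{R}(T)\oplus\mathbb{R}(T)i$, deduce $L=F\oplus Fi$ with $F=L\cap\mathbb{R}(T)$ the fixed field of $\tau|_L$, and then invoke only the classical commutative L\"uroth theorem for $\mathbb{R}\subseteq F\subseteq\mathbb{R}(T)$, getting $L=\mathbb{C}(u)$ with a generator $u\in\mathbb{R}(T)$. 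The paper instead goes up to $\mathbb{H}(X)$: it shows $\phi(L)+\phi(L)j$ is a division ring containing $\mathbb{H}$ (Lemma \ref{swap}), applies the noncommutative Theorem \ref{thm:main_1} to write it as $\mathbb{H}(f)$ with $f\in\mathbb{R}(X)$, and then descends back to $L$ via a case analysis on the M\"obius transformation of order at most $2$ that its involution $\tau$ induces on $\mathbb{R}(f)$, producing an explicit generator in each case. Your fixed-field argument avoids both Theorem \ref{thm:main_1} and the M\"obius case analysis, at the cost of not producing the quaternionic description $\phi(L)+\phi(L)j=\mathbb{H}(f)$, which is not needed for the statement. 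Two points you gesture at should be spelled out, though neither is a gap: the monomial verification of $\phi\circ\tau=\sigma\circ\phi$ (using $\sigma(j)=j$), and the spanning claim $\mathbb{C}(T,\sigma)=\mathbb{R}(T)+\mathbb{R}(T)i$; for the latter you can avoid quoting the value of the center altogether by noting that $\mathbb{R}(T)+\mathbb{R}(T)i$ is closed under multiplication (conjugation by $i$ induces $T\mapsto -T$ on $\mathbb{R}(T)$), has no zero divisors, and is a left $\mathbb{R}(T)$-space of dimension at most $2$, hence is a division ring containing $\mathbb{C}[T,\sigma]$, so it equals $\mathbb{C}(T,\sigma)$.
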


The key idea for proving Theorem \ref{thm:main_2} is to apply Theorem \ref{thm:main_1} to a suitable extension of $L$ inside $\mathbb{H}(X)$. We will first need the following lemma:

\begin{lemma}\label{swap} 
{\rm{(1)}} For every $f \in \mathbb{C}(T, \sigma)$, we have $j \phi(f) = \sigma(\phi(f)) j$. 

\vspace{0.5mm}

\noindent
{\rm{(2)}} We have $j \not \in \phi(\mathbb{C}(T, \sigma))$. In particular, the $\phi(L)$-linear space $\phi(L) + \phi(L)j$ has dimension 2 for every division ring $L$ contained in $\mathbb{C}(T, \sigma)$.

\vspace{0.5mm}

\noindent
{\rm{(3)}} Let $L$ be a division ring contained in $\mathbb{C}(T, \sigma)$ such that $\phi(L)$ is $\sigma$-invariant. Then $\phi(L) + \phi(L)j$ is a division ring.

\vspace{0.5mm}

\noindent
{\rm{(4)}} For $i \in \{1,2\}$, fix a division ring $L_i$ contained in $\mathbb{C}(T, \sigma)$. Assume $\phi(L_1) \subseteq \phi(L_2)$ and $\phi(L_2) + \phi(L_2)j \subseteq \phi(L_1) + \phi(L_1)j$. Then $L_1=L_2$.
\end{lemma}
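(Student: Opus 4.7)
For (1), the identity $j\phi(f) = \sigma(\phi(f))j$ is additive and multiplicative in $f$, so it suffices to verify it on ring generators of $\mathbb{C}[T,\sigma]$. For $c \in \mathbb{C}$, $\phi(c) = c$ and $jc = \bar c j = \sigma(c)j$; for $T$, $\phi(T) = jX$ and $j(jX) = -X = jXj = \sigma(jX)j$ (using $X \in Z(\mathbb{H}[X])$, $j^2 = -1$, and $\sigma(j) = j$). The identity then extends from $\mathbb{C}[T,\sigma]$ to $\mathbb{C}(T,\sigma)$ by rewriting $j\phi(Q) = \sigma(\phi(Q))j$ as $j\phi(Q)^{-1} = \sigma(\phi(Q))^{-1}j$ for nonzero $Q \in \mathbb{C}[T,\sigma]$.

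For (2), suppose for contradiction that $j = \phi(f_0)$ for some $f_0 \in \mathbb{C}(T,\sigma)$. Since $\phi(f_0)\phi(i) = ji = -ij = \phi(-i)\phi(f_0)$, injectivity of $\phi$ forces $f_0 i = -i f_0$ in $\mathbb{C}(T,\sigma)$. Now $\mathbb{C}(T,\sigma)$ is a quaternion algebra over its center $\mathbb{R}(T^2)$, and admits the decomposition $\mathbb{C}(T,\sigma) = \mathbb{C}(T^2) \oplus \mathbb{C}(T^2)T$ as a left $\mathbb{C}(T^2)$-module. Writing $f_0 = \alpha + \beta T$ with $\alpha, \beta \in \mathbb{C}(T^2)$, and using $i\alpha = \alpha i$, $i\beta = \beta i$, and $Ti = -iT$, the relation $f_0 i = -i f_0$ reduces to $2\alpha = 0$, so $f_0 = \beta T$. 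Since $\phi(T^2) = -X^2$, we have $\phi(\beta) \in \mathbb{C}(X^2)$ and $\phi(f_0) = \phi(\beta)jX$. Setting $\phi(\beta)jX = j$ and using that $X$ is central in $\mathbb{H}(X)$ forces $\phi(\beta) = 1/X$, which lies outside $\mathbb{C}(X^2)$, a contradiction. The ``in particular'' assertion follows at once: since $\phi(L) \subseteq \phi(\mathbb{C}(T,\sigma))$ does not contain $j$, the elements $1$ and $j$ are $\phi(L)$-linearly independent.

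For (3), I would first observe by direct quaternion computation that $ja = \sigma(a)j$ for all $a \in \mathbb{H}$, and extend this by $\mathbb{R}(X)$-linearity (using $\mathbb{R}(X) \subseteq Z(\mathbb{H}(X))$) to all $a \in \mathbb{H}(X)$. The product formula $(a_1 + b_1 j)(a_2 + b_2 j) = (a_1 a_2 - b_1 \sigma(b_2)) + (a_1 b_2 + b_1 \sigma(a_2))j$, combined with $\sigma$-invariance of $\phi(L)$, shows that $\phi(L) + \phi(L)j$ is closed under multiplication, hence is a subring of $\mathbb{H}(X)$ and has no zero divisors. To promote this to a division ring, I use (2): $\phi(L) + \phi(L)j$ has $\{1, j\}$ as a basis both as a left $\phi(L)$-vector space and as a right $\phi(L)$-module ($\sigma$-invariance ensures the right action is well-defined). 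For any nonzero $x$ in it, right multiplication by $x$ is a left $\phi(L)$-linear endomorphism of this 2-dimensional space, injective (since $\mathbb{H}(X)$ is a division ring), hence surjective; this yields $y$ with $yx = 1$. The symmetric argument with left multiplication produces $z$ with $xz = 1$, and associativity forces $y = z$.

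Finally, (4) is immediate from (2): the inclusion $\phi(L_1) \subseteq \phi(L_2)$ gives $L_1 \subseteq L_2$ by injectivity of $\phi$, and for the reverse any $a \in \phi(L_2) \subseteq \phi(L_1) + \phi(L_1)j$ can be written $a = c + dj$ with $c, d \in \phi(L_1)$, so $dj = a - c \in \phi(\mathbb{C}(T,\sigma))$; if $d \neq 0$, then $j = d^{-1}(dj) \in \phi(\mathbb{C}(T,\sigma))$, contradicting (2), so $d = 0$ and $a \in \phi(L_1)$. The main obstacle I anticipate is (2): ruling out $j \in \phi(\mathbb{C}(T,\sigma))$ requires identifying just enough structural information about $\phi(\mathbb{C}(T,\sigma)) \subseteq \mathbb{H}(X)$, and once that is in hand, parts (3) and (4) follow with only routine bookkeeping.
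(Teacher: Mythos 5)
Your proposal is correct, but it diverges from the paper's proof in parts (2) and (4), so a comparison is in order. Parts (1) and (3) follow essentially the paper's path: reduce (1) to $jc=\sigma(c)j$ on generators and pass to quotients, and in (3) use the product formula $(a_1+b_1j)(a_2+b_2j)=(a_1a_2-b_1\sigma(b_2))+(a_1b_2+b_1\sigma(a_2))j$ plus $\sigma$-invariance; the only difference is that where the paper closes by citing the fact that a domain which is a finite-dimensional vector space over a division subring is a division ring, you reprove that fact via injectivity-implies-surjectivity of the multiplication maps, which is self-contained and fine (in fact the left vector space structure alone already suffices, so the right-module half of your argument is not needed). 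For (2) the paper's argument is much lighter than yours: writing $j=\phi(P)\phi(Q)^{-1}$, i.e.\ $\phi(P)=j\phi(Q)$ in $\mathbb{H}[X]$, and comparing coefficients of powers of $X$ gives $a_\ell=jb_\ell$ with $a_\ell,b_\ell\in\mathbb{C}$, forcing $\phi(Q)=0$ --- two lines, no structure theory. You instead derive $f_0i=-if_0$ and invoke the decomposition $\mathbb{C}(T,\sigma)=\mathbb{C}(T^2)\oplus\mathbb{C}(T^2)T$ (and the identification of the center as $\mathbb{R}(T^2)$). Your route does work, but this decomposition is the one step you assert without support; it is standard and fillable in a few lines (the right-hand side is a subring with no zero divisors, finite-dimensional over the central subfield $\mathbb{R}(T^2)$, hence a division ring containing $\mathbb{C}[T,\sigma]$, hence all of $\mathbb{C}(T,\sigma)$), and is in the spirit of the reference the paper cites in Proposition \ref{converse}, but as written it is an unproved input that the paper's coefficient comparison avoids entirely. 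For (4) the paper counts dimensions over $\phi(L_1)$, getting $2\geq 2[\phi(L_2):\phi(L_1)]$, whereas you argue elementwise, writing $a\in\phi(L_2)$ as $c+dj$ with $c,d\in\phi(L_1)$ and using (2) to force $d=0$; your version is if anything more direct, and both are correct.
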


\begin{proof}[Proof of Lemma \ref{swap}]
(1) By additivity and the definition of $\phi$, it suffices to consider the case where $f$ is a monomial $aT^n$ ($n \in \mathbb{N}$, $a \in \mathbb{C}$). As $j\phi(f)= jaj^n X^n$ and $\sigma(\phi(f))j = \sigma(a j^n)X^n j = \sigma(a) j^n X^n j= \sigma(a) j^{n+1} X^n$, it suffices to check that $ja=\sigma(a)j$, which was already mentioned in the proof of Lemma \ref{embedding}. 
\vspace{0.5mm}

\noindent
(2) If $j \in \phi(\mathbb{C}(T, \sigma))$, there are non-negative integers $n, m$ and elements $a_0, \dots, a_n, b_0, \dots, b_m$ of $\mathbb{C}$ such that $b_0 + b_1 jX + \cdots + b_m j^m X^m \not=0$ and such that
$$a_0 + a_1 jX + \cdots + a_nj^n X^n = j(b_0 + b_1 jX + \cdots + b_m j^m X^m).$$
Therefore, $a_\ell = jb_\ell$ for $0 \leq \ell \leq n=m$, i.e., every coefficient $b_\ell$ equals 0, which cannot hold.

\vspace{0.5mm}

\noindent
(3) Fix $a, b,c,  d \in L$. By (1), we have
$$(\phi(a)+\phi(b)j)(\phi(c)+\phi(d)j) = (\phi(a) \phi(c) - \phi(b) \sigma(\phi(d))) + (\phi(a) \phi(d) + \phi(b) \sigma(\phi(c)))j.$$
As we assumed that $\phi(L)$ is $\sigma$-invariant, $\phi(a) \phi(c) - \phi(b) \sigma(\phi(d))$ and $\phi(a) \phi(d) + \phi(b) \sigma(\phi(c))$ are in $\phi(L)$. Hence, $\phi(L) + \phi(L)j$ is a ring, which has no zero divisors. As $\phi(L) + \phi(L)j$ is also a finite dimensional $\phi(L)$-linear space, it is in fact a division ring (see, e.g., \cite[Proposition 3.1.2]{Coh95}).

\vspace{0.5mm}

\noindent
(4) By (2), we have
$$\begin{array}{lllll}
2 &=& {\rm{dim}}_{\phi(L_1)} \, \phi(L_1) + \phi(L_1)j &\geq& {\rm{dim}}_{\phi(L_1)} \, \phi(L_2) + \phi(L_2)j \\
&&&= &[\phi(L_2):\phi(L_1)] \cdot  {\rm{dim}}_{\phi(L_2)} \,  \phi(L_2) + \phi(L_2)j  \\
&&&  = &2 [\phi(L_2):\phi(L_1)].
\end{array}$$
Hence, $\phi(L_1) = \phi(L_2)$, i.e., $L_1=L_2$.
\end{proof}

\begin{proof}[Proof of Theorem \ref{thm:main_2}]
First, let us introduce the following map:
$$\tau : \left \{ \begin{array} {ccc}
\mathbb{H} & \longrightarrow & \mathbb{H} \\
a +bi + cj +dk & \longmapsto & a+bi-cj-dk
\end{array} \right.. $$
Then $\tau$ is an automorphism of $H$ of order 2. We may extend $\tau$ to an automorphism of $\mathbb{H}[X]$ of order 2, by setting
$$\tau(a_0 + a_1 X + \cdots  + a_n X^n) = \tau(a_0) - \tau(a_1) X + \cdots + (-1)^n \tau(a_n) X^n \quad (n \geq 0, a_0, \dots, a_n \in \mathbb{H}).$$
As $\mathbb{H}[X]$ is a right Ore domain, we may extend $\tau$ to an automorphism of $\mathbb{H}(X)$ of order 2, by setting
$$\tau (PQ^{-1}) = \tau(P) \tau(Q)^{-1} \quad (P \in \mathbb{H}[X], Q \in \mathbb{H}[X] \setminus \{0\}).$$

Now, consider the $\phi(L)$-linear space $\phi(L) + \phi(L)j$. Since $\tau$ fixes $\phi(\mathbb{C}(T, \sigma))$ point-wise and $\tau(j) = -j$, we have $\tau(\phi(a) + \phi(b)j) = \phi(a) + \phi(-b) j$ for $a,b \in L$. In particular,
\begin{equation} \label{eq:inc}
\tau(\phi(L) + \phi(L)j) = \phi(L) + \phi(L)j \quad {\rm{and}} \quad \phi(L) = \{u \in \phi(L) + \phi(L)j : \tau(u)=u\}.
\end{equation} 
Moreover, as $\phi(L)$ is $\sigma$-invariant, Lemma \ref{swap}(3) yields that $\phi(L) + \phi(L)j$ is a division ring, which is contained in $\mathbb{H}(X)$ and which contains $\phi(\mathbb{C})= \mathbb{C}$ and $j$, i.e., $\mathbb{H}$. By Theorem \ref{thm:main_1}, we then have 
\begin{equation} \label{eq:quat}
\phi(L) + \phi(L)j = \mathbb{H}(f)
\end{equation} 
for some $f \in \mathbb{R}(X)$. Let us fix $g, h \in \mathbb{R}(X^2)$ such that
\begin{equation} \label{decomposition}
f = g + h X.
\end{equation} 
Since $\tau(X) = -X$ and $\tau$ fixes $\mathbb{R}$ point-wise, we have $\tau(g)=g$ and $\tau(h)=h$. Hence, 
\begin{equation} \label{dec2}
\tau(f) = g-hX 
\end{equation}
and, by \eqref{eq:inc} and \eqref{eq:quat}, we get that $\tau(f) \in \phi(L) + \phi(L)j$. As $g= (f+ \tau(f))/2$ by \eqref{decomposition} and \eqref{dec2}, we get that $g \in \phi(L) + \phi(L)j$ and, as $g$ is invariant under $\tau$, we may apply \eqref{eq:inc} to get
\begin{equation} \label{eq:g}
g \in \phi(L).
\end{equation}
In particular, $jhX = j(f-g) \in \phi(L) + \phi(L)j$. Since $\tau(jhX)$ and $jhX$ coincide, \eqref{eq:inc} actually gives
\begin{equation} \label{eq:jhX}
jhX \in \phi(L).
\end{equation}

Next, as $f \in \mathbb{R}(X)$, $f$ is central in $\mathbb{H}(f)$. As $\tau(\mathbb{H}(f))=\mathbb{H}(f)$ by \eqref{eq:inc} and \eqref{eq:quat}, we get that $\tau(f)$ is central in $\mathbb{H}(f)$, i.e., $\tau(f) \in \mathbb{R}(f)$ (see Lemma \ref{lemma:dl}). As $\tau$ fixes $\mathbb{R}$ point-wise, the restriction of $\tau$ to $\mathbb{R}(f)$ is an $\mathbb{R}$-automorphism of order $\leq 2$. Assume $f$ is algebraic over $\mathbb{R}$. Then $\phi(L) + \phi(L)j = \mathbb{H}$ by \eqref{eq:quat}. As $\mathbb{H}=\phi(\mathbb{C}) + \phi(\mathbb{C})j$ and as $\phi(\mathbb{C}) \subseteq \phi(L)$, we may apply Lemma \ref{swap}(4) to get that $L= \mathbb{C}$.

Therefore, assume $f$ is transcendental over $\mathbb{R}$. Hence, $\tau|_{\mathbb{R}(f)}$ is a M{\"o}bius transformation of order at most 2. First, assume $\tau(f)=f$, i.e., $g-hX = g+hX$ by \eqref{decomposition} and \eqref{dec2}. Hence, $h=0$ and 
\begin{equation} \label{equality}
\phi(L) + \phi(L)j = \mathbb{H}(g)
\end{equation}
by \eqref{eq:quat}. Moreover, as $g = \phi(g(-T^2)) \in \phi(\mathbb{R}(T))$, we get that $\mathbb{C}(g) = \phi(\mathbb{C}(g(-T^2)))$ is $\sigma$-invariant (see Example \ref{example}). We may then apply Lemma \ref{swap}(3) to get that
$\mathbb{C}(g) + \mathbb{C}(g)j$ is a division ring. But $\mathbb{C}(g) + \mathbb{C}(g)j$ contains $\mathbb{C}$, $g$ and $j$, i.e., contains $\phi(L) + \phi(L)j$ by \eqref{equality}. As \eqref{eq:g} yields $\mathbb{C}(g) \subseteq \phi(L)$, we may then apply Lemma \ref{swap}(4) to get $L = \mathbb{C}(g(-T^2))$.

From now on, assume $\tau(f) \not=f$. Set 
\begin{equation} \label{eq:f}
\tau(f) = \frac{af+b}{cf+d}
\end{equation}
with $a,b,c,d \in \mathbb{R}$ satisfying $ad-bc \neq 0$. We then have
\begin{equation} \label{eq:f2}
f = \tau^2(f) = \frac{(a^2+bc)f+b(a+d)}{c(a+d)f+cb+d^2}.
\end{equation}
First, assume $c = 0$, in which case \eqref{eq:f} and \eqref{eq:f2} reduce to
$$\tau(f) = \frac{af+b}{d} \quad {\rm{and}} \quad f = \frac{a^2f + b(a+d)}{d^2},$$
respectively. In particular, from the second equality, we get that $f(1-a^2/d^2)$ is a real number, which is possible only if $a^2=d^2$. If $a=d$, then the second equality yields further $b=0$. Hence, $\tau(f)=f$ by the first equality, which cannot hold. Therefore, $a=-d$ and the first equality yields $\tau(f) = -f-ba^{-1}$. Then, by \eqref{decomposition} and \eqref{dec2}, it follows that $g=-b(2a)^{-1} \in \mathbb{R}$. Thus 
\begin{equation} \label{dec3}
\phi(L) + \phi(L)j = \mathbb{H}(hX)
\end{equation} 
by \eqref{eq:quat} and \eqref{decomposition}. Moreover, as $jhX = \phi(Th(-T^2)) \in \phi(\mathbb{R}(T))$, we get that $\mathbb{C}(jhX) = \phi(\mathbb{C}(Th(-T^2)))$ is $\sigma$-invariant (see Example \ref{example}). Therefore, Lemma \ref{swap}(3) yields that $\mathbb{C}(jhX) + \mathbb{C}(jhX)j$ is a division ring. But $\mathbb{C}(jhX) + \mathbb{C}(jhX)j$ contains $\mathbb{C}$, $jhX$ and $j$, i.e., contains $\phi(L) + \phi(L)j$ by \eqref{dec3}. As \eqref{eq:jhX} yields $\mathbb{C}(jhX) \subseteq \phi(L)$, we get $L = \mathbb{C}(Th(-T^2))$ from Lemma \ref{swap}(4).

Finally, assume $c \neq 0$. Then divide $a$, $b$ and $d$ by $c$ to assume that $c = 1$. We then have 
$$\tau(f) = \frac{af+b}{f+d} \quad {\rm{and}} \quad f = \frac{(a^2 + b)f  + b(a+d)}{(a+d)f + b+ d^2},$$
by \eqref{eq:f} and \eqref{eq:f2}. By the second equality, we get $d=-a$ and the first equality then yields
$$\tau(f) = \frac{af+b}{f-a}.$$
In particular, 
$$\tau(f-a) = \frac{af+b-a(f-a)}{f-a} = \frac{b+a^2}{f-a}.$$
As $a \in \mathbb{R}$, we have $\mathbb{R}(f) = \mathbb{R}(f-a)$ and, hence, $\mathbb{H}(f) = \mathbb{H}(f-a)$. Therefore, we may replace $f$ with $f-a$ to assume $\tau(f) = 1/(\alpha f)$, where $\alpha = 1/(b+ a^2)$. Set $f=P(X)/Q(X)$, where $P$ and $Q$ are coprime polynomials with real coefficients. Then
$$\frac{P(-X)}{Q(-X)} = \tau(f) = \frac{1}{\alpha f} = \frac{1}{\alpha} \cdot \frac{Q(X)}{P(X)},$$
i.e., $\alpha P(-X) P(X) = Q(-X) Q(X)$. In particular, $\alpha P(0)^2 = Q(0)^2$. If $P(0)=0$, then we also have $Q(0)=0$, which cannot hold since $P$ and $Q$ are coprime. Therefore, $\alpha = Q(0)^2 / P(0)^2 >0$.

Given a square root $\sqrt{\alpha}$ of $\alpha$ in $\mathbb{R}$, set
$$v = \frac{1-\sqrt{\alpha}f}{1+\sqrt{\alpha}f} \cdot j \in \mathbb{H}(f) = \phi(L) + \phi(L)j.$$ 
Then
$$\tau(v) = \frac{(1-\sqrt{\alpha}(\alpha f)^{-1})}{1+\sqrt{\alpha} (\alpha f)^{-1}} \cdot (-j) = \frac{(\alpha f - \sqrt{\alpha})}{\alpha f + \sqrt{\alpha}} \cdot (-j) = \frac{\big(1-\sqrt{\alpha} f\big)}{1+\sqrt{\alpha}f } \cdot j = v,$$
thus yielding $\mathbb{C}(v) \subseteq \phi(L)$ by \eqref{eq:inc}. If $\phi(L) + \phi(L)j \subseteq \mathbb{C}(v) + \mathbb{C}(v) j$, then Lemma \ref{swap}(4) yields $L= \mathbb{C}(w)$, where $w$ is the unique element of $\mathbb{C}(T, \sigma)$ fulfilling $\phi(w)=v$. To get the desired inclusion, note first that, as $f \in \mathbb{R}(X)$ and as $\sqrt{\alpha} \in \mathbb{R}$, we have $\sigma(v)=v$. Therefore, $\mathbb{C}(v)$ is $\sigma$-invariant and, by Lemma \ref{swap}(3), we get that $\mathbb{C}(v) + \mathbb{C}(v)j$ is a division ring. As $\phi(L) + \phi(L)j =\mathbb{H}(f)$ by \eqref{eq:quat} and as $\mathbb{H} \subseteq \mathbb{C}(v) + \mathbb{C}(v)j$, it then suffices to show that $f \in \mathbb{C}(v)  + \mathbb{C}(v) j$. But, since
$$f \mapsto \frac{1-\sqrt{\alpha}f}{1+\sqrt{\alpha}f}$$ 
is a M{\"o}bius transformation of the rational function field $\mathbb{R}(f)$, there are $a_1, a_2, a_3, a_4 \in \mathbb{R}$ such that
$$f = \bigg(a_1 \frac{1-\sqrt{\alpha}f}{1+\sqrt{\alpha}f} + a_2 \bigg) \bigg(a_3 \frac{1-\sqrt{\alpha}f}{1+\sqrt{\alpha}f} + a_4 \bigg)^{-1} = (a_2 -a_1 v j) (a_4 - a_3 vj)^{-1}$$
and, since $\mathbb{C}(v) + \mathbb{C}(v)j$ is a division ring, we have $(a_2 -a_1 v j) (a_4 - a_3 vj)^{-1} \in \mathbb{C}(v) + \mathbb{C}(v)j$.
\end{proof}

The following proposition shows that $\sigma$-invariance is not necessary in general for intermediate division rings $\mathbb{C} \subseteq L \subseteq \mathbb{C}(T, \sigma)$ to be of the form $\mathbb{C}(v)$ with $v \in \mathbb{C}(T, \sigma)$:

\begin{proposition} \label{converse}
Set $v = T +i T^3 \in \mathbb{C}(T, \sigma)$. Then $\phi(\mathbb{C}(v))$ is not $\sigma$-invariant.
\end{proposition}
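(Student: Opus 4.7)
The plan is to reduce the statement to the strict inclusion $\mathbb{C}(v) \subsetneq \mathbb{C}(T,\sigma)$ and then verify this by a center comparison. First, I would compute $u := \phi(v) = jX + ij^3 X^3 = jX - kX^3$, so that $\sigma(u) = jX + kX^3 = \phi(T - iT^3)$. Suppose, for a contradiction, that $\phi(\mathbb{C}(v))$ is $\sigma$-invariant. Then $\sigma(u)$ lies in $\phi(\mathbb{C}(v)) = \mathbb{C}(u)$ (see Example~\ref{example}), whence by injectivity of $\phi$ one has $T - iT^3 \in \mathbb{C}(v)$. Combined with $v = T + iT^3 \in \mathbb{C}(v)$, this gives $T = \tfrac{1}{2}(v + (T - iT^3)) \in \mathbb{C}(v)$, and hence $\mathbb{C}(v) = \mathbb{C}(T,\sigma)$.

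Next, I would derive a contradiction by analyzing the structure of $\mathbb{C}(v)$ as a sub-division-ring of $\mathbb{C}(T,\sigma)$. A direct calculation in $\mathbb{C}[T,\sigma]$ yields $vi = -iv$ and $v^2 = T^2 + T^6 \in \mathbb{R}(T^2)$. Letting $\sigma_0$ denote complex conjugation on $\mathbb{C}$, these relations show that the assignment $S \mapsto v$ extends to a ring homomorphism $\mathbb{C}[S,\sigma_0] \to \mathbb{C}[T,\sigma]$. I would verify its injectivity by a lowest-degree induction in $T$: since $v^n = T^n + (\text{terms of strictly higher order in } T)$ for each $n \geq 0$, the coefficient of $T^k$ in any sum $\sum_n c_n v^n$ equals $c_k$ whenever $c_0, \dots, c_{k-1}$ vanish, so any nontrivial relation is ruled out.

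Passing to quotient skew fields, this yields $\mathbb{C}(v) \cong \mathbb{C}(S,\sigma_0)$. A direct computation (analogous to the one showing $Z(\mathbb{C}(T,\sigma)) = \mathbb{R}(T^2)$) gives $Z(\mathbb{C}(S,\sigma_0)) = \mathbb{R}(S^2)$, whence $Z(\mathbb{C}(v)) = \mathbb{R}(v^2) = \mathbb{R}(T^2 + T^6)$. Since $T^2$ is a root of $Y^3 + Y - (T^2 + T^6) \in \mathbb{R}(T^2 + T^6)[Y]$, one has $[\mathbb{R}(T^2) : \mathbb{R}(T^2 + T^6)] = 3$, and in particular $Z(\mathbb{C}(v)) \subsetneq Z(\mathbb{C}(T,\sigma))$. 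This contradicts $\mathbb{C}(v) = \mathbb{C}(T,\sigma)$, completing the argument. The main subtlety is proving the ``transcendence'' of $v$ (i.e., the injectivity above); everything else amounts to routine computations in the skew polynomial ring or the center.
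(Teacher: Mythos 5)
Your proposal is correct and follows essentially the same route as the paper: assuming $\sigma$-invariance you recover $T\in\mathbb{C}(v)$, hence $\mathbb{C}(v)=\mathbb{C}(T,\sigma)$, then identify $\mathbb{C}(v)$ with a skew rational function field over $\mathbb{C}$ (the paper writes $\mathbb{C}(v)=\mathbb{C}(v,\sigma)$ and cites \cite[lemme 2.3]{BDL22} for $Z(\mathbb{C}(v))=\mathbb{R}(v^2)$, where you invoke the analogous direct computation for $\mathbb{C}(S,\sigma_0)$), and conclude by the degree-3 comparison of $\mathbb{R}(T^2)$ over $\mathbb{R}(T^2+T^6)$. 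The only point stated more briefly than in the paper is the equality $[\mathbb{R}(T^2):\mathbb{R}(T^2+T^6)]=3$, which requires checking that $Y^3+Y-Z$ has no root in $\mathbb{R}(Z)$ for $Z$ transcendental, as the paper does.
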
 

\begin{proof} 
First, note that $\phi(v) = jX + ij^3 X^3 = jX - ij X^3 = jX - kX^3$. 

Now, assume $\phi(\mathbb{C}(v))$ is $\sigma$-invariant. Since $\phi(\mathbb{C}(v)) = \mathbb{C}(\phi(v)) = \mathbb{C}(jX - kX^3)$ (see Example \ref{example} for the first equality), we have $\sigma(jX - kX^3) = jX + kX^3 \in \mathbb{C}(jX - kX^3)$. Therefore, $\mathbb{C}(jX - kX^3)$ contains $jX = \phi(T)$. Consequently, $\mathbb{C}(v)$ contains $T$, i.e., 
\begin{equation} \label{eq:ct}
\mathbb{C}(v)=\mathbb{C}(T, \sigma).
\end{equation} 

Next, let $R$ denote the intersection of all subrings of $\mathbb{C}[T, \sigma]$ which contain both $\mathbb{C}$ and $v$. Since $vi=-iv$, we have
$$R = \{a_0 + a_1v + \cdots + a_n v^n : n \geq 0, a_0, \dots, a_n \in \mathbb{C}\}.$$
Moreover, since $v$ has positive degree, the $v^n$'s ($n \geq 0$) are linearly independent over $\mathbb{C}$. Therefore, $R$ is the polynomial ring $\mathbb{C}[v, \sigma]$. In particular, $\mathbb{C}(v)$, which is the intersection of all division rings contained in $\mathbb{C}(T, \sigma)$ and containing both $\mathbb{C}$ and $v$, equals $\mathbb{C}(v, \sigma)$. Hence, by, e.g., \cite[lemme 2.3]{BDL22}, the center of $\mathbb{C}(v)$ equals $\mathbb{R}(v^2) = \mathbb{R}(T^2 + T^6)$. By \eqref{eq:ct}, we then obtain
\begin{equation} \label{center}
\mathbb{R}(T^2) = \mathbb{R}(T^2 + T^6).
\end{equation}

Finally, note that $T^2$ is a root of $X^3 + X - (T^2 + T^6) \in \mathbb{R}(T^2 + T^6)[X]$ and that $T^2 + T^6$ is transcendental over $\mathbb{R}$. Considering a transcendental $Y$, it is easily checked that $X^3 + X - Y$ has no root in $\mathbb{R}(Y)$, i.e., that $X^3 + X - Y$ is irreducible over $\mathbb{R}(Y)$. Consequently, $\mathbb{R}(T^2) = \mathbb{R}(T^2+T^6)(T^2)$ is a degree 3 extension of $\mathbb{R}(T^2+T^6)$, which contradicts \eqref{center}.
\end{proof}

To summarize, we have the following combination of Theorem \ref{thm:main_2} and Proposition \ref{converse}:

\begin{corollary} 
Let $\mathbb{C} \subseteq L \subseteq \mathbb{C}(T,\sigma)$ be an intermediate division ring. For $L$ to be of the form $\mathbb{C}(v)$ with $v \in \mathbb{C}(T, \sigma)$, it is sufficient, but not necessary in general, that $\phi(L)$ is $\sigma$-invariant. 
\end{corollary}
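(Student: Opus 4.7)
The corollary is a direct combination of the two preceding results, so my plan is essentially to assemble the two halves and confirm there is no extra content to verify.

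For the sufficiency half, I would simply quote Theorem \ref{thm:main_2}: if $\mathbb{C} \subseteq L \subseteq \mathbb{C}(T,\sigma)$ is an intermediate division ring such that $\phi(L)$ is $\sigma$-invariant, then $L = \mathbb{C}(v)$ for some $v \in \mathbb{C}(T,\sigma)$. Nothing more is needed here, since the corollary literally repeats this implication.

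For the non-necessity half, I would exhibit a single explicit intermediate division ring for which $\sigma$-invariance fails while the conclusion still holds. Take $v = T + iT^3 \in \mathbb{C}(T,\sigma)$ and set $L = \mathbb{C}(v)$. Then $L$ is tautologically of the form $\mathbb{C}(v)$ with $v \in \mathbb{C}(T,\sigma)$, and Proposition \ref{converse} states precisely that $\phi(L) = \phi(\mathbb{C}(v))$ is not $\sigma$-invariant. So $L$ furnishes the required example showing that $\sigma$-invariance of $\phi(L)$ is not a necessary condition.

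There is no real obstacle in this proof, as both directions are already established in the section; the only small point to double-check is that the example from Proposition \ref{converse} is formally an intermediate division ring strictly between $\mathbb{C}$ and $\mathbb{C}(T,\sigma)$ (or at least a valid choice of $L$ in the statement), which it is by construction since $\mathbb{C} \subseteq \mathbb{C}(v) \subseteq \mathbb{C}(T,\sigma)$. Thus the proof reduces to citing Theorem \ref{thm:main_2} for sufficiency and Proposition \ref{converse} for the explicit counterexample refuting necessity.
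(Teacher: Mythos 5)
Your proposal is correct and matches the paper exactly: the corollary is stated there as a summary "combination of Theorem \ref{thm:main_2} and Proposition \ref{converse}", with Theorem \ref{thm:main_2} giving sufficiency and the example $v = T + iT^3$ of Proposition \ref{converse} refuting necessity, just as you argue.
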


Finding a precise, unconditional version of L\"uroth's Theorem for division rings of the form $D(T,\sigma)$ (where $D$ is a division algebra) remains an open question. More generally, one may ask for a precise version of Igusa's Theorem for skew function fields of higher dimension. The following example demonstrates another type of obstruction for such a theorem.

\begin{example}\label{referee_example} Let $R$ be the first Weyl algebra over the complex numbers $\mathbb{C}$, with generators $X,Y$. That is, $R$ is the quotient of the free $\mathbb{C}$-algebra in $X,Y$ by the ideal $\langle XY-YX -1 \rangle$. Let $K$ be the first Weyl skew field, that is, the quotient skew field of $R$. Then $K$ has transcendence degree $2$ over $\C$, in the sense of Gelfand-Kirillov (see \cite{GK66}). The skew field $K$ contains a (commutative) subfield $L$, generated over $\C$ by elements $a,b$ satisfying $a^2-b^3=1$, by a theorem of Dixmier \cite[Proposition 5.5]{Dix68}, and such a subfield $L$ is not generated over $\C$ by a single generator. \end{example}

\appendix

\section{Proof of Lemma \ref{lemma:finite}} \label{sec:finite}

Firstly, set $Z(H) \cdot Z(L) = \{x_1y_1 + \cdots + x_n y_n : n \geq 1, x_1, \dots, x_n \in Z(H), y_1, \dots, y_n \in Z(L)\}.$ Then $Z(H) \cdot Z(L)$ contains $Z(H)$ and $Z(L)$, is contained in $L$, is a commutative ring, and is a $Z(L)$-subspace of $L$. Moreover, $Z(H) \cdot Z(L)$ is an integral domain (since $L$ is a division ring) and ${\rm{dim}}_{Z(L)} \, Z(H) \cdot Z(L) \leq {\rm{dim}}_{Z(L)} \, L < \infty$. Hence, $Z(H) \cdot Z(L)$ is a field.

Secondly, set $H \cdot Z(L) = \{x_1y_1 + \cdots + x_n y_n : n \geq 1, x_1, \dots, x_n \in H, y_1, \dots, y_n \in Z(L)\}.$ Then $H \cdot Z(L)$ contains $H$ and $Z(L)$, is contained in $L$, is a ring, and is a $Z(L)$-subspace of $L$ with ${\rm{dim}}_{Z(L)} \, H \cdot Z(L) < \infty$. In fact, $H \cdot Z(L)$ is a $(Z(H) \cdot Z(L))$-subspace of $L$ and we have ${\rm{dim}}_{Z(H) \cdot Z(L)} \,  H \cdot Z(L) < \infty$. Moreover, $Z(H) \cdot Z(L) \subseteq Z(H \cdot Z(L))$.

Thirdly, let $\{e_i\}_{i \in I}$ be a $Z(H)$-basis of $H$. We claim that $\{e_i\}_{i \in I}$ is a $(Z(H) \cdot Z(L))$-basis of $H \cdot Z(L)$. Since ${\rm{dim}}_{Z(H) \cdot Z(L)} \,  H \cdot Z(L) < \infty$, we get that $I$ is finite, as needed for the lemma.

To show the claim, we first show that the $e_i$'s span $H \cdot Z(L)$ over $Z(H) \cdot Z(L)$. To that end, fix $n \geq 1$, $x_1, \dots, x_n \in H$ and $y_1, \dots, y_n \in Z(L)$. Then there is a finite subset $J$ of $I$ such that, for $k \in \{1, \dots, n\}$, there are elements $\lambda_{k,j}$ ($j \in J$) of $Z(H)$ verifying $x_k = \sum_{j \in J} \lambda_{k,j}e_j.$ We then have
$$x_1y_1 + \cdots + x_n y_n = \bigg(\sum_{j \in J} \lambda_{1,j}e_j \bigg) y_1 + \cdots + \bigg(\sum_{j \in J} \lambda_{n,j}e_j \bigg) y_n = \sum_{j \in J} (\lambda_{1,j} y_1 + \cdots + \lambda_{n,j}y_n) e_j.$$
Finally, let $\{i_1, \dots, i_n\} \subseteq I$ and $\lambda_1, \dots, \lambda_n \in Z(H) \cdot Z(L)$ be such that $\lambda_1 e_{i_1} + \cdots + \lambda_n e_{i_n} =0$, i.e., 
\begin{equation} \label{eq:lin}
e_{i_1} \lambda_1 + \cdots + e_{i_n} \lambda_n =0
\end{equation}
(as $Z(H) \cdot Z(L) \subseteq Z(H \cdot Z(L))$). As $(x,y) \in H \times (Z(H) \cdot Z(L)) \mapsto xy \in H \cdot Z(L)$ is $Z(H)$-bilinear, there is a unique $Z(H)$-linear map $\psi : H \otimes_{Z(H)} (Z(H) \cdot Z(L)) \rightarrow H \cdot Z(L)$ which fulfills $\psi(x \otimes y) = xy$ for every $(x,y) \in H \times (Z(H) \cdot Z(L))$. Moreover, as $Z(H) \cdot Z(L) \subseteq Z(H \cdot Z(L))$, the map $\psi$ is a morphism of $Z(H)$-algebras and, by, e.g., \cite[Proposition 2.36]{Kna07}, it is injective. Therefore, by \eqref{eq:lin}, we have
\begin{equation} \label{eq:lin2}
e_{i_1} \otimes \lambda_1 + \cdots + e_{i_n} \otimes \lambda_n =0.
\end{equation}
Now, fix $Z(H)$-linear maps $f_1, \dots, f_n : Z(H) \cdot Z(L) \rightarrow Z(H)$ and, for $j \in \{1, \dots, n\}$, set 
$$e_{i_j}^* : \left \{ \begin{array} {ccc}
H & \rightarrow & Z(H) \\
e_i & \mapsto & \delta_{i, i_j}
\end{array} \right.,$$
where $\delta_{i, i_j}$ denotes the Kronecker symbol. Since 
$$F : \left \{ \begin{array} {ccc}
H \times (Z(H) \cdot Z(L)) & \rightarrow & Z(H) \\
(x,y) & \mapsto &  e_{i_1}^*(x) f_1(y) + \cdots + e_{i_n}^*(x) f_n(y)
\end{array} \right.$$
is $Z(H)$-bilinear, there is a unique $Z(H)$-linear map $\widetilde{F} : H \otimes_{Z(H)} (Z(H) \cdot Z(L)) \rightarrow Z(H)$ which fulfills $\widetilde{F}(x \otimes y) = F(x,y) = e_{i_1}^*(x) f_1(y) + \cdots + e_{i_n}^*(x) f_n(y)$ for every $(x,y) \in H \times (Z(H) \cdot Z(L))$. By \eqref{eq:lin2}, we then have
$$0 = \widetilde{F}(e_{i_1} \otimes \lambda_1 + \cdots + e_{i_n} \otimes \lambda_n) = f_1(\lambda_1) + \cdots + f_n(\lambda_n).$$
In particular, fixing $j \in \{1, \dots, n\}$ and setting $f_1 = \cdots = f_{j-1} = f_{j+1} = \cdots = f_n=0$, we get $f_j(\lambda_j)=0$. Fix a $Z(H)$-basis $\{\epsilon_i\}_{i \in I'}$ of $Z(H) \cdot Z(L)$ and, for $i \in I'$, set 
$$\epsilon_i^* : \left \{ \begin{array} {ccc}
Z(H) \cdot Z(L) & \rightarrow & Z(H) \\
\epsilon_{i'} & \mapsto & \delta_{i, i'}
\end{array} \right..$$
As $f_j$ was arbitrary, we get $\epsilon_i^*(\lambda_j)=0$ for every $i \in I'$, i.e., $\lambda_j=0$. This concludes the proof.

\bibliography{Biblio2}
\bibliographystyle{alpha}

\end{document}